\documentclass[11pt]{article}

\setlength{\oddsidemargin}{-0.5cm}
\setlength{\textwidth}{16.4cm}
\setlength{\topmargin}{-0.65cm}
\setlength{\textheight}{21cm}

\usepackage{amsmath, amsfonts, amsthm, verbatim}

\newcommand{\R}{\mathbb{R}}
\newcommand{\N}{\mathbb{N}}
\newcommand{\Z}{\mathbb{Z}}
\newcommand{\E}{\mathbf{E}}
\newcommand{\p}{\mathbf{P}}
\newcommand{\dint}{\int \!\! \int}

\theoremstyle{plain}
\newtheorem{lemma}{Lemma}
\newtheorem{theorem}{Theorem}

\newtheorem{corollary}{Corollary}

\begin{document}

\title{A note on the Kesten--Grincevi\v{c}ius--Goldie theorem}

\author{P\'eter Kevei\thanks{
Center for Mathematical Sciences, Technische Universit\"at M\"unchen, 
Boltzmannstra{\ss }e 3, 85748 Garching, Germany,
\texttt{peter.kevei@tum.de}}}

\date{}

\maketitle

\begin{abstract}
\noindent
Consider the perpetuity equation $X \stackrel{\mathcal{D}}{=} A X + B$,
where $(A,B)$ and $X$ on the right-hand side are independent. 
The Kesten--Grincevi\v{c}ius--Goldie theorem states that $\p \{ X > x \} \sim c 
x^{-\kappa}$ if
$\E A^\kappa = 1$, $\E A^\kappa \log_+ A < \infty$, and $\E |B|^\kappa < \infty$.
We assume that $\E |B|^\nu < \infty$ for some $\nu > \kappa$, 
and consider two cases (i) $\E A^\kappa = 1$, 
$\E A^\kappa \log_+ A = \infty$; (ii) $\E A^\kappa < 1$, $\E A^t = \infty$ 
for all $t > \kappa$.
We show that under appropriate additional assumptions on 
$A$ the asymptotic $ \p \{ X > x \}  \sim c x^{-\kappa} \ell(x) $ holds,
where $\ell$ is a nonconstant slowly varying function. We use Goldie's renewal theoretic 
approach.\\
\noindent \textit{Keywords:} Perpetuity equation; Stochastic difference equation; Strong 
renewal theorem; Exponential functional; Maximum of random walk; Implicit renewal 
theorem. \\
\noindent \textit{MSC2010:} 60H25, 60E99
\end{abstract}

\section{Introduction and results}

Consider the perpetuity equation
\begin{equation} \label{eq:equation}
X \stackrel{\mathcal{D}}{=} A X + B,
\end{equation}
where $(A,B)$ and $X$ on the right-hand side are independent. 
To exclude degenerate cases as usual we assume that $\p \{ A x + B = x\} < 1$ for any
$x \in \R$. We also assume that $A \geq 0$, $A \not\equiv 1$, and that $\log A$ 
conditioned on $A \neq 0$ is nonarithmetic.

The first results on existence and tail behavior of the solution are due to
Kesten \cite{Kesten}, who proved that if
\begin{equation} \label{eq:Goldie-cond}
\begin{gathered}
\E A^\kappa = 1, \ \E A^\kappa \log_+ A < \infty,
\ \log A \text{ conditioned on $A \neq 0$ is nonarithmetic,} \\
\text{and } \E |B|^\kappa < \infty \text{ for some  } \kappa > 0,
\end{gathered}
\end{equation}
where $\log_+ x = \max\{ \log x, 0\}$, then 
the solution of (\ref{eq:equation}) has 
Pareto-like tail, i.e.
\begin{equation} \label{eq:KG}
\p \{ X > x \} \sim c_+ x^{-\kappa} \text{ and } 
\p \{ X < -x \} \sim c_{-} x^{-\kappa} \text{ as } x \to \infty
\end{equation}
for some $c_+, c_{-} \geq 0, c_+ + c_{-} > 0$. 
(In the following any nonspecified limit relation is meant as $x \to \infty$.)
Actually, Kesten proved a similar statement in $d$ dimension.
Later Goldie \cite{Goldie} simplified the proof of the same  result in the 
one-dimensional case (for more general equations) using renewal 
theoretic methods. His method is based on ideas from Grincevi\v{c}ius \cite{Grinc}, who 
partly rediscovered Kesten's results.
We refer to the implication (\ref{eq:Goldie-cond}) $\Rightarrow$ (\ref{eq:KG}) as the 
Kesten--Grincevi\v{c}ius--Goldie theorem.
That is, under general conditions on $A$, if $\p \{ A > 1 \} > 0$ the tail decreases 
at least polynomially. In this direction, we mention a result by 
Dyszewski \cite{Dys}, who showed that the tail of the solution of (\ref{eq:equation}) can 
even be slowly varying. On the other hand, Goldie and 
Gr\"ubel \cite{GolGru} showed that the solution has at least exponential tail under the 
assumption $A \leq 1$ a.s. For further results in the thin-tailed case see
Hitczenko and Weso{\l}owski \cite{HW}. Returning to the heavy-tailed case Grey 
\cite{Grey} showed that if 
$\E A^{\kappa} <1$, $\E A^{\kappa + \epsilon} < \infty$, then the tail of $X$ is 
regularly 
varying with parameter $-\kappa$ if and only if the tail of $B$ is.
Grey's results are also based on the previous results by Grincevi\v{c}ius \cite{Grinc}.

That is, the regular variation of the solution $X$ of (\ref{eq:equation}) is 
either caused by $A$ alone, or by $B$ alone (under some weak condition on the 
other variable). Our intention in the present note is to explore more the role 
of $A$, i.e.~to extend the Kesten--Grincevi\v{c}ius--Goldie theorem.
More precisely, we assume that $\E |B|^\nu < \infty$ 
for some $\nu > \kappa$, and we obtain sufficient conditions on $A$ that 
imply $\p \{  X > x \} \sim \ell(x) x^{-\kappa}$, where $\ell(\cdot)$ is some 
nonconstant slowly varying function.

The perpetuity 
equation (\ref{eq:equation}) has a wide range of applications; we only mention the ARCH 
and GARCH 
models in financial time series analysis, see Embrechts, Kl\"uppelberg and 
Mikosch 
\cite[Section 8.4 Perpetuities and ARCH Processes]{EKM}. For a complete account 
on the equation (\ref{eq:equation}) refer 
to Buraczewski, Damek and Mikosch \cite{BDM}. Equation (\ref{eq:equation}) is also 
strongly related to exponential functional of L\'evy processes, see Arista and Rivero
\cite{AR} and Behme and Lindner \cite{BL} and the references therein.

The key idea in Goldie's proof is to introduce the new probability 
measure
\begin{equation} \label{eq:alphameasure}
\p_\kappa \{ \log A \in C \} = \E [I(\log A \in C) A^\kappa],
\end{equation}
where $I(\cdot)$ stands for the indicator function.
Since $\E A^\kappa = 1$ this is indeed a probability measure.
If $F$ is the distribution function (df) of $\log A$ under $\p$, then under $\p_\kappa$
\begin{equation} \label{eq:Fkappa}
F_\kappa(x) = \p_\kappa \{ \log A \leq x \} =
\int_{-\infty}^x e^{\kappa y} F({\mathrm{d}} y).
\end{equation}
Under $\p_\kappa$ equation (\ref{eq:equation}) can be rewritten as a 
renewal 
equation, where the renewal function corresponds to $F_\kappa$.
If $\E_\kappa \log A = \E A^\kappa \log A \in (0, \infty)$, then
a renewal theorem on the line
implies the required tail asymptotics. Yet a smoothing 
transformation and a Tauberian argument is needed, since key renewal theorems 
apply only for direct Riemann integrable functions.

What we assume instead of the finiteness of the mean is that under $\p_\kappa$ 
the variable $\log A$ is in the domain of attraction of a stable law with index
$\alpha \in (0,1]$, i.e.~$\log A \in D(\alpha)$. Since
\begin{equation} \label{eq:Fknegtail}
F_\kappa(-x) = \p_\kappa \{ \log A \leq -x \} = \E I(\log A \leq -x) A^\kappa 
\leq 
e^{-\kappa x},
\end{equation}
under $\p_\kappa$ the variable $\log A$ belongs to $D(\alpha)$ if and only if
\begin{equation} \label{eq:assump-2}
1 - F_\kappa(x) = \overline F_\kappa(x) = \frac{\ell(x)}{x^\alpha}, 
\end{equation}
where $\ell$ is a slowly varying function.
Let $U(x) = \sum_{n=0}^\infty F_\kappa^{*n}(x)$ denote the renewal function of $\log A$ 
under $\p_\kappa$. Note that $U(x) < \infty$ for all $x \in \R$, since the random walk
$(S_n = \log A_1 + \ldots + \log A_n)_{n \geq 1}$ drifts to infinity under $\p_\kappa$ 
and $\E_\kappa [(\log A)_-]^2 < \infty$ by (\ref{eq:Fknegtail}); see 
Theorem 2.1 by Kesten and Maller \cite{KestenMaller}. Put 
\[ 
m(x) = \int_0^x [ F_\kappa(-u) +\overline F_\kappa (u)] {\mathrm{d}} u
\sim \int_0^x \overline F_\kappa(u) {\mathrm{d}} u
\sim \frac{\ell(x) x^{1-\alpha}}{1-\alpha}
\] 
for the truncated expectation; the first asymptotic follows from (\ref{eq:Fknegtail}),
the second from (\ref{eq:assump-2}), and holds only for $\alpha \neq 1$.
To obtain the asymptotic behavior of the solution of the renewal equation we have to use 
a key renewal theorem for random variables with infinite mean.
The infinite mean analogue of the strong renewal theorem (SRT) is the convergence
\begin{equation} \label{eq:renewal-alpha}
\lim_{x \to \infty} m(x) [U(x+h) - U(x) ] = h C_\alpha, \quad \forall h > 0,
\quad \text{where } \ C_\alpha = [\Gamma(\alpha) \Gamma(2-\alpha)]^{-1}.
\end{equation}
The first infinite mean SRT was shown by Garsia and Lamperti \cite{GarLam} in 1963 for 
nonnegative integer valued random variables, which was extended to the nonarithmetic case 
by Erickson \cite{Eri70, Eri71}. In both cases it was shown that for
$\alpha \in (1/2, 1]$ (in \cite{GarLam} $\alpha < 1$)
assumption (\ref{eq:assump-2}) implies the SRT, while for $\alpha 
\leq 1/2$ further assumptions are needed. For $\alpha \leq 1/2$ sufficient conditions for 
(\ref{eq:renewal-alpha}) were given by Chi \cite{Chi}, Doney \cite{Doney2}, Vatutin and 
Topchii \cite{VT}. The necessary and sufficient condition for nonnegative random 
variables was given independently by Caravenna \cite{Car} and Doney \cite{Doney}. They 
showed that if for a nonnegative random variable with 
df $H$ (\ref{eq:assump-2}) holds with
$\alpha \leq 1/2$, then (\ref{eq:renewal-alpha}) holds if and 
only if
\begin{equation} \label{eq:Doney-cond}
\lim_{\delta \to 0} \limsup_{x \to \infty} x \overline H(x) 
\int_1^{\delta x} \frac{1}{y \overline H(y)^2} H(x - {\mathrm{d}} y) = 0.
\end{equation}
We need this result in our case, where the random variable is not 
necessarily 
positive, but the left tail is exponential. This follows along the same lines as the 
proof of the SRT in \cite{Car}. For the sake of completeness, in the 
Appendix we 
sketch the proof of this result, see Theorem \ref{th:SRT}. For further results and 
history about the infinite mean 
SRT we refer 
to \cite{Car, Doney} and the references therein. In Lemma \ref{lemma:keyren} below,
which is a modification of Erickson's Theorem 3 \cite{Eri70}, we prove the corresponding 
key renewal theorem. Since in the literature (\cite[Lemma 3]{Rivero},
\cite[Theorem 4]{VT}) this lemma is stated incorrectly, we give a counterexample in the 
Appendix.
We use the notation $x_+ = \max \{ x, 0 \}$, $x_- = \max \{ -x, 0 \}$,
$x \in \R$.
Summarizing, our assumptions on $A$ are the following:
\begin{equation} \label{eq:A-ass}
\begin{gathered}
\E A^\kappa = 1, \ \text{(\ref{eq:assump-2}) and (\ref{eq:Doney-cond}) holds for 
$F_\kappa$ for some $\kappa > 0$ and $\alpha \in (0,1]$,} \\
\text{and  $\log A$ conditioned on $A \neq 0$ is nonarithmetic}.
\end{gathered}
\end{equation}

\begin{theorem} \label{th:=1}
Assume (\ref{eq:A-ass}), $\E |B|^\nu < \infty$ for some $\nu > \kappa$, 
and $\p \{ A x + B = x\} < 1$ for any $x \in \R$. 
Then for the tail of the solution of the perpetuity equation 
(\ref{eq:equation}) we have
\begin{equation} \label{eq:tail}
\begin{split}
& \lim_{x \to \infty} m(\log x) x^\kappa \p \{ X > x \}  = C_\alpha
\frac{1}{\kappa} \E [ (AX + B)_+^\kappa - (AX)_+^\kappa ],\\
& \lim_{x \to \infty} m(\log x) x^\kappa \p \{ X \leq -x \}  = C_\alpha
\frac{1}{\kappa} \E [ (AX + B)_-^\kappa - (AX)_-^\kappa ].
\end{split}
\end{equation}
Moreover,
$\E [ (AX + B)_+^\kappa - (AX)_+^\kappa ] +
\E [ (AX + B)_-^\kappa - (AX)_-^\kappa ] > 0$.
\end{theorem}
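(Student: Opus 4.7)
The plan is to adapt Goldie's renewal-theoretic proof of the Kesten--Grincevi\v{c}ius--Goldie theorem to the infinite-mean regime, using Lemma \ref{lemma:keyren} in place of the classical key renewal theorem. Set
\[
f(u) = e^{\kappa u} \p\{X > e^u\}, \qquad g(u) = e^{\kappa u}\bigl[\p\{AX + B > e^u\} - \p\{AX > e^u\}\bigr].
\]
Splitting $\p\{X > e^u\} = \p\{AX+B > e^u\}$ as $\p\{AX > e^u\} + [\p\{AX+B > e^u\} - \p\{AX > e^u\}]$, conditioning on $A$, and using the tilting (\ref{eq:alphameasure}), one obtains the renewal equation
\[
f(u) = \int_{-\infty}^\infty f(u - y)\, F_\kappa(\mathrm{d}y) + g(u), \qquad u \in \R,
\]
whose unique bounded solution is $f = g * U$, where $U$ is the renewal function of $F_\kappa$ under $\p_\kappa$.

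The main obstacle is to verify that the forcing term (after smoothing) falls within the scope of Lemma \ref{lemma:keyren}. I first check that $g$ is Lebesgue integrable: the substitution $x = e^u$ and Fubini yield
\[
\int_{-\infty}^\infty g(u)\, \mathrm{d}u = \frac{1}{\kappa}\, \E\bigl[(AX+B)_+^\kappa - (AX)_+^\kappa\bigr],
\]
and the finiteness of the right-hand side follows from the mean-value bound $|(y+b)_+^\kappa - y_+^\kappa| \leq \kappa |b|(|y|+|b|)^{\kappa-1}$ for $\kappa \geq 1$ (respectively $\leq |b|^\kappa$ for $\kappa \leq 1$), combined with $\E|B|^\nu < \infty$ for $\nu > \kappa$, $\E A^\kappa = 1$, and Goldie's moment lemma giving $\E|X|^\tau < \infty$ for every $\tau < \kappa$. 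To upgrade integrability to direct Riemann integrability I convolve the renewal equation with a smooth, compactly supported kernel $\varphi$ with $\int \varphi = 1$, producing $\bar f = \bar f * F_\kappa + \bar g$ with $\bar g$ continuous, bounded, and integrable---hence directly Riemann integrable.

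Lemma \ref{lemma:keyren} then yields
\[
\lim_{u \to \infty} m(u) \bar f(u) = C_\alpha \int_{-\infty}^\infty g(y)\, \mathrm{d}y.
\]
To pass from $\bar f$ back to $f$, I exploit the two-sided multiplicative bound $e^{-\kappa s} f(u - s) \leq f(u) \leq e^{\kappa s} f(u - s)$ for $s \geq 0$, which follows from the monotonicity of $u \mapsto \p\{X > e^u\}$ and gives $\bar f(u) = (1 + O(\delta)) f(u)$ as $u \to \infty$ once $\mathrm{supp}\,\varphi \subset [-\delta, \delta]$; letting $\delta \downarrow 0$ delivers the first line of (\ref{eq:tail}). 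The second line follows from the same argument applied to $-X$ (with $B$ replaced by $-B$). For the positivity of the sum of the two constants, I use the standard Goldie-type contradiction argument: if both expectations on the right of (\ref{eq:tail}) vanished, then the integrals of $g$ and of its negative-tail analogue would both be zero, and this---combined with the non-degeneracy assumption $\p\{Ax + B = x\} < 1$ for all $x \in \R$---can be shown to be incompatible with the distributional identity (\ref{eq:equation}).
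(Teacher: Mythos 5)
Your overall strategy is the same as the paper's: tilt, rewrite as a renewal equation, smooth, apply the infinite-mean key renewal theorem (Lemma \ref{lemma:keyren}), unsmooth, then verify moments and positivity. (The paper isolates this machinery as a standalone implicit renewal theorem, Theorem \ref{th:impren}, and then plugs in, but that is only an organizational difference.) However, there are genuine gaps at the exact points where this problem is delicate.

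The most serious one: Lemma \ref{lemma:keyren} requires not just that $z$ be directly Riemann integrable but also that $z(x) = O(x^{-1})$ as $x\to\infty$, and you never verify this for $\bar g$. This is not a formality --- the paper gives an explicit counterexample in the Appendix showing that direct Riemann integrability alone does \emph{not} imply the key renewal theorem with infinite mean, and flags this as an error in \cite{Rivero} and \cite{VT}. To get $\bar g(u) = O(u^{-1})$ you need the \emph{weighted} integrability $\int_0^\infty |\p\{X>x\}-\p\{AX>x\}|\,x^{\kappa+\delta-1}\,\mathrm{d}x < \infty$ for some $\delta>0$ (which then gives $\bar g(u)=O(e^{-\delta u})$ for a compactly supported mollifier); your moment computation only establishes the case $\delta=0$. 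The required strengthening to $\delta>0$ is precisely where the H\"older-with-care argument in the paper (the choice (\ref{eq:eps-cond}) of $\delta$, together with $\E|X|^\tau<\infty$ for $\tau<\kappa$ from \cite{AIR}) is needed.

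Two smaller but still real problems. First, ``continuous, bounded and integrable, hence directly Riemann integrable'' is false as a general implication; an integrable continuous function can have thinning spikes of height one. The conclusion that $\bar g = g*\varphi$ is dRi \emph{is} true, but the correct reason is the convolution structure: $\sup_{[kh,(k+1)h)}|\bar g| \le \|\varphi\|_\infty \int_{kh-\delta_0}^{(k+1)h+\delta_0}|g|$, which is summable in $k$ by integrability of $g$. Second, your two-sided bound $e^{-\kappa s}f(u-s)\le f(u)\le e^{\kappa s}f(u-s)$, $s\ge0$, is wrong in its left half: it would require $\p\{X>e^u\}\ge e^{-2\kappa s}\p\{X>e^{u-s}\}$, which fails for, say, a bounded $X$. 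Only $f(u)\le e^{\kappa s}f(u-s)$ is available a priori. This is enough to unsmooth, but not via $\bar f(u)=(1+O(\delta))f(u)$; rather, one gets $e^{-2\kappa\delta_0}f(u+\delta_0)\le \bar f(u)\le e^{2\kappa\delta_0}f(u-\delta_0)$, and then one must invoke the slow variation of $m$ to identify $\lim m(u)f(u)$ by squeezing and letting $\delta_0\downarrow 0$. (The paper instead uses the one-sided exponential kernel and the Goldie--Lemma 9.3 averaging argument over $[ax,bx]$, which achieves the same end with only one-sided monotonicity.) You also do not check that the remainder term in the iterated renewal equation vanishes, which the paper verifies explicitly; and the positivity at the end needs the precise reduction to the random-walk maximum $M$ and Theorem \ref{th:max=1} with $B\equiv 1$, which your sketch does not supply.
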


Theorem \ref{th:=1} is stated as a conjecture/open problem in
\cite[Problem 1.4.2]{Iksanov} by Iksanov.

The assumption $\p \{ A x + B = x\} < 1$ is only needed to ensure 
$\E [ (AX + B)_+^\kappa - (AX)_+^\kappa ] +\E [ (AX + B)_-^\kappa - (AX)_-^\kappa ] > 0$.
Indeed, if $A x_0 + B \equiv x_0$ for some $x_0 \in \R$, then the solution of
(\ref{eq:equation}) is $X \equiv x_0$, for which (\ref{eq:tail}) trivially holds, with 
both constants on the right equal to 0.

The conditions of the theorem are stated in terms of the properties of $A$ under the new 
measure $\p_\kappa$. Simple properties of regularly varying functions imply that if
$e^{\kappa x} \overline F(x) = \alpha \, \ell(x) /(\kappa \, x^{\alpha +1})$
with a slowly varying function $\ell$, then (\ref{eq:assump-2}) holds. 
See the remark after Theorem 2 \cite{Kor} by Korshunov.

Using the same methods, Goldie obtained tail asymptotics for solutions of 
more general random equations. The extension of these 
results to our setup is straightforward. We mention a particular example, because 
in the proof of the positivity of the constant in Theorems \ref{th:=1} and  \ref{th:<1} 
we need a result on the maximum of a random walk.

Consider the equation
\begin{equation} \label{eq:maxeq}
X \stackrel{\mathcal{D}}{=} AX \vee B,
\end{equation}
where $a \vee b = \max \{ a,b \}$, $A \geq 0$ and $(A,B)$ and $X$ on the right-hand 
side are independent. Theorem 5.2 in \cite{Goldie} states that if
(\ref{eq:Goldie-cond}) holds, then there is a unique solution $X$ to (\ref{eq:maxeq}), 
and 
$\p \{ X > x \} \sim c x^{-\kappa}$ with some $c \geq 0$, and $c > 0$ if and only if
$\p \{ B > 0 \} > 0$.

\begin{theorem} \label{th:max=1}
Assume (\ref{eq:A-ass}), $\E |B|^\nu < \infty$ for some $\nu > \kappa$. 
Then for the tail of the solution of (\ref{eq:maxeq}) we have
\begin{equation} \label{eq:maxtail}
\lim_{x \to \infty} m(\log x) x^\kappa \p \{ X > x \}  = C_\alpha
\frac{1}{\kappa} \E [ (AX_+ \vee B_+)^\kappa - (AX_+)^\kappa ].
\end{equation}
\end{theorem}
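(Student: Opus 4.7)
The plan is to run Goldie's renewal scheme exactly as in Theorem \ref{th:=1}, using that the positive part of a solution to (\ref{eq:maxeq}) again satisfies an equation of the same form. Since $A \geq 0$ yields $(AX)_+ = AX_+$, applying $(\,\cdot\,)_+$ to (\ref{eq:maxeq}) produces $X_+ \stackrel{\mathcal D}{=} AX_+ \vee B_+$, so it suffices to prove (\ref{eq:maxtail}) with $Y := X_+ \geq 0$ replacing $X$. No smoothing transformation is needed here: the decomposition $\p\{Y > y\} = \p\{AY > y\} + \p\{B_+ > y,\, AY \leq y\}$, multiplied by $y^\kappa$ with $y = e^t$ and combined with the change of measure (\ref{eq:alphameasure}), converts directly into the renewal equation
\[
g(t) = (g * F_\kappa)(t) + h(t), \quad g(t) := e^{\kappa t}\p\{Y > e^t\}, \quad h(t) := e^{\kappa t}\p\{B_+ > e^t,\, AY \leq e^t\}.
\]

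Iterating yields $g = h * U$ with $U = \sum_{n \geq 0} F_\kappa^{*n}$, the convergence $g * F_\kappa^{*n}(t) \to 0$ being justified by $S_n \to +\infty$ $\p_\kappa$-a.s.\ together with boundedness of $g$ (established as in Theorem \ref{th:=1}). To invoke Lemma \ref{lemma:keyren}, I would check direct Riemann integrability of $h$: the trivial bound $h(t) \leq e^{\kappa t}$ controls $t \to -\infty$, while Markov's inequality together with $\E B_+^\nu \leq \E|B|^\nu < \infty$ gives $h(t) \leq \E|B|^\nu \, e^{(\kappa - \nu)t}$ for $t \geq 0$, so $\sum_{n \in \Z} \sup_{t \in [n, n+1]} h(t) < \infty$. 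Lemma \ref{lemma:keyren} then yields $m(t) g(t) \to C_\alpha \int_{-\infty}^\infty h(s)\,ds$.

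The final step is a Goldie-type manipulation: substituting $y = e^s$ and using $(a \vee b)^\kappa - a^\kappa = \kappa \int_0^\infty y^{\kappa-1} I(a \leq y < b)\,dy$ for $a, b \geq 0$,
\[
\int_{-\infty}^\infty h(s)\,ds = \int_0^\infty y^{\kappa-1}\, \p\{B_+ > y,\, AY \leq y\}\,dy = \frac{1}{\kappa}\E\bigl[(AY \vee B_+)^\kappa - (AY)^\kappa\bigr],
\]
and setting $x = e^t$ together with $Y = X_+$ produces (\ref{eq:maxtail}). The main technical subtlety is verifying the precise dRi-type condition in Lemma \ref{lemma:keyren}, which in the infinite mean setting is more delicate than for Blackwell's classical theorem; however, the uniform bound $h(t) \leq \min\bigl(e^{\kappa t},\, \E|B|^\nu\, e^{(\kappa - \nu)t}\bigr)$ leaves ample room, so this should be routine rather than a genuine obstacle.
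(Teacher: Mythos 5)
Your proposal is correct, but it takes a genuinely different route from the paper's. The paper simply invokes the already-proven implicit renewal theorem (Theorem \ref{th:impren}), whose proof carries the full smoothing/unsmoothing machinery; for Theorem \ref{th:max=1} all that remains is to verify the $L^1$-type hypothesis $\int_0^\infty |\p\{X>x\}-\p\{AX>x\}|\,x^{\kappa+\delta-1}\,{\mathrm d}x<\infty$, which takes two lines via Fubini and the bound $\p\{AX\vee B>x\geq AX\}\leq\p\{B_+>x\}$. You instead re-derive the renewal equation from scratch for the max case, and observe that---after the reduction $Y=X_+$ via the pathwise identity $(AX\vee B)_+=AX_+\vee B_+$---the forcing term $h(t)=e^{\kappa t}\p\{B_+>e^t,\,AY\leq e^t\}$ is nonnegative and admits the two-sided pointwise bound $h(t)\leq\min(e^{\kappa t},\E B_+^\nu e^{(\kappa-\nu)t})$, so it can be fed directly into Lemma \ref{lemma:keyren}, skipping the smoothing transform entirely. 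This is a legitimate shortcut peculiar to the max equation (for the perpetuity $\psi$ can change sign and only an $L^1$ bound is available, so smoothing is unavoidable there); what it buys you is a cleaner derivation in which $m(\log x)x^\kappa\p\{X>x\}\to C_\alpha\int h$ pops out directly, at the cost of redoing the renewal-equation setup rather than reusing Theorem \ref{th:impren}.

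Two small points you gloss over, both fixable. First, for direct Riemann integrability the sup-sum bound $\sum_n\sup_{[n,n+1]}h<\infty$ alone is not sufficient; you also need $h$ to be Riemann integrable on bounded intervals. This does hold here, since $h(t)=e^{\kappa t}\bigl[\p\{AY\vee B_+>e^t\}-\p\{AY>e^t\}\bigr]$ is a continuous function times a difference of two monotone functions, hence of bounded variation and continuous off a countable set---but it deserves a sentence. Second, the appeal to ``boundedness of $g$'' to kill the iteration tail is both unnecessary and not obviously available (globally bounding $g(t)=e^{\kappa t}\p\{Y>e^t\}$ is essentially the conclusion you are after). The cleaner justification, which the paper uses, is to switch back to $\p$: one has $(g*F_\kappa^{*n})(t)=\E_\kappa g(t-S_n)=e^{\kappa t}\,\E\,\p\{Y>e^{t-S_n}\}\to 0$ by dominated convergence, since $S_n\to-\infty$ $\p$-a.s.\ and the integrand is bounded by $1$.
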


Equation (\ref{eq:maxeq}) has an important application in the analysis of the 
maximum of random walks with negative drift. Indeed, if $B \equiv 1$, then $\log X = M$, 
where
$M = \max \{0, S_1, S_2, \ldots \}$, and 
$S_n = \log A_1 + \log A_2 + \ldots + \log A_n$, where $\log A_1, \log A_2, \ldots$ are 
iid $\log A$. For general nonnegative $B$ the equation corresponds to the maximum of 
perturbed random walks; see Iksanov \cite{Iksanov2}.

Finally, we note that the tail behavior (\ref{eq:tail}) with nontrivial slowly 
varying function was noted before by Rivero \cite{Rivero} for exponential 
functionals of L\'evy processes. In Counterexample 1 \cite{Rivero} the 
following is shown. Let $(\sigma_t)_{t \geq 0}$ be a nonlattice subordinator, 
such that $\E e^{\kappa \sigma_1} < \infty$ and
$m(x) = \E I(\sigma_1 > x) e^{\kappa \sigma_1}$ is regularly varying with index 
$-\alpha \in (-1/2,-1)$. Consider the L\'evy process $(\xi_t)_{t \geq 0}$ 
obtained by killing $\sigma$ at $\zeta$, an independent exponential time with 
parameter $\log \E e^{\kappa \sigma_1}$. Then, in terms of the 
exponential functional $J=\int_0^\zeta e^{\xi_t} {\mathrm{d}} t$ Lemma 4 \cite{Rivero} 
states that 
$\lim_{x \to \infty} m(\log x) x^{\kappa} \p \{ J > x \} = c$,
for some $c > 0$.

\bigskip

Assume now that $\E A^\kappa = \theta < 1$ for some $\kappa > 0$, and
$\E A^t = \infty$ for any $t > \kappa$. Consider the new probability measure
\[ 
\p_\kappa \{ \log A \in C \} = \theta^{-1} \E  [I(\log A \in C) A^\kappa],
\] 
that is under the new measure $\log A$ has df
\[ 
F_\kappa (x) = \theta^{-1}  \int_{-\infty}^x e^{\kappa y} F({\mathrm{d}} y).
\] 
Note that these are the same definitions as in (\ref{eq:alphameasure}) and 
(\ref{eq:Fkappa}), the only difference is that now $\theta < 1$. Therefore the 
same notation should not be confusing.
The assumption $\E A^t = \infty$ for all $t > \kappa$ means that $F_\kappa$ is 
heavy-tailed. 
Rewriting again (\ref{eq:equation}) under the new measure $\p_\kappa$ leads now
to a \emph{defective} renewal equation for 
the tail of $X$. To analyze the asymptotic behavior of the resulting equation we use the 
techniques and results developed by Asmussen, Foss and Korshunov \cite{AFK}. A slight 
modification of their setup is necessary, since our df $F_\kappa$ is not concentrated on 
$[0,\infty)$.

For some $T \in (0,\infty]$ let $\Delta = (0, T]$. For a df $H$ we put
$H(x + \Delta) = H(x+T) - H(x)$.
A df $H$ on $\R$ is in the class $\mathcal{L}_\Delta$ if 
$H(x+t+\Delta) / H(x+\Delta) \to 1$ uniformly in $t \in [0,1]$, and
it belongs to the class of $\Delta$-subexponential 
distributions, $H \in \mathcal{S}_\Delta$, if
$H(x + \Delta) > 0$ for $x$ large enough, $H \in \mathcal{L}_\Delta$,
and $(H * H)(x + \Delta) \sim 2 H(x+\Delta)$.
If $H \in \mathcal{S}_\Delta$ for every $T > 0$, then it is called \emph{locally 
subexponential}, $H \in \mathcal{S}_{loc}$.
The definition of the class $\mathcal{S}_\Delta$ is given by Asmussen, Foss and 
Korshunov 
\cite{AFK} for distributions on $[0,\infty)$ and by Foss, Korshunov and Zachary
\cite[Section 4.7]{FKZ} for distributions on $\R$.
In order to use a slight extension of Theorem 5 \cite{AFK}
we need the additional natural assumption 
$\sup_{y > x} F_\kappa(y + \Delta) = O(F_\kappa(x+\Delta))$ for $x$ large 
enough. Properties of locally subexponential distributions, in particular its relation to 
infinitely divisible distributions were investigated by Watanabe and Yamamuro
\cite{WY1, WY2}. Our assumptions on $A$ are the following:
\begin{equation} \label{eq:A-ass<1}
\begin{gathered}
\E A^\kappa = \theta < 1, \  \kappa > 0, \quad
F_\kappa \in \mathcal{S}_{loc}, \quad 
\sup_{y > x} F_\kappa(y + \Delta) 
= O(F_\kappa(x+\Delta)) \text{ for $x$ large enough,}\\
\text{ and $\log A$ conditioned on $A$ being nonzero is nonarithmetic}. 
\end{gathered}
\end{equation}

\begin{theorem} \label{th:<1}
Assume (\ref{eq:A-ass<1}), $\E |B|^\nu < \infty$ for some 
$\nu > \kappa$, $\p \{ A x + B = x\} < 1$ for any $x \in \R$.
Then for the tail of the solution of the perpetuity equation 
(\ref{eq:equation}) we have
\begin{equation} \label{eq:<1asy}
\begin{split}
& \lim_{x \to \infty} g(\log x)^{-1} x^\kappa \p \{ X > x \} =
\frac{\theta}{(1-\theta)^2 \kappa} \E [ (AX + B)_+^\kappa - (AX)_+^\kappa ],\\
& \lim_{x \to \infty}g(\log x)^{-1} x^\kappa \p \{ X \leq -x \}  =
\frac{\theta}{(1-\theta)^2 \kappa} \E [ (AX + B)_-^\kappa - (AX)_-^\kappa ], 
\end{split}
\end{equation}
where $g(x) =  F_\kappa (x+1) - F_\kappa(x)$.
Moreover,
$\E [ (AX + B)_+^\kappa - (AX)_+^\kappa ] +
\E [ (AX + B)_-^\kappa - (AX)_-^\kappa ] > 0$.
\end{theorem}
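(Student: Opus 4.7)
The plan is to follow Goldie's renewal-theoretic approach, but in the defective setting: the relevant convolution operator has norm $\theta<1$, so instead of a key renewal theorem I invoke the local-subexponential asymptotic $\sum_{n\geq 0}\theta^n F_\kappa^{*n}(x+\Delta)\sim\theta(1-\theta)^{-2}F_\kappa(x+\Delta)$ from \cite{AFK}. First I set $r_+(t)=e^{\kappa t}\p\{X>e^t\}$ and $r_-(t)=e^{\kappa t}\p\{X\leq -e^t\}$. Starting from $\p\{X>x\}=\p\{AX+B>x\}$, I add and subtract $\p\{AX>x\}$ and use the tilt defining $F_\kappa$ to rewrite
\[
e^{\kappa t}\p\{AX>e^t\}=\E\bigl[A^\kappa e^{\kappa(t-\log A)}\p\{X>e^{t-\log A}\mid A\}\bigr]=\theta\int r_+(t-y)\,F_\kappa(\mathrm{d}y).
\]
This yields the defective renewal equation
\[
r_+=z_++\theta\,(r_+*F_\kappa),\qquad z_+(t):=e^{\kappa t}\bigl[\p\{AX+B>e^t\}-\p\{AX>e^t\}\bigr],
\]
together with the analogous equation for $r_-$.

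Next I control the forcing term. A Fubini computation in the spirit of \cite[Lemma 9.4]{Goldie} gives
\[
\int_{-\infty}^\infty z_+(t)\,\mathrm{d}t=\frac{1}{\kappa}\E\bigl[(AX+B)_+^\kappa-(AX)_+^\kappa\bigr],
\]
and this quantity is finite under $\E|B|^\nu<\infty$ with $\nu>\kappa$. Beyond integrability I need the decay $z_+(t)=o(g(t))$ as $t\to\infty$ with $g(t)=F_\kappa(t+1)-F_\kappa(t)$; this follows from splitting the symmetric difference of $\{AX+B>e^t\}$ and $\{AX>e^t\}$ according to whether $|B|\leq\varepsilon e^t$, and combining Markov's inequality on $|B|^\nu$ with the tail-comparability assumption $\sup_{y>x}F_\kappa(y+\Delta)=O(F_\kappa(x+\Delta))$ built into (\ref{eq:A-ass<1}).

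With these bounds I iterate to $r_+=\sum_{n\geq 0}\theta^n F_\kappa^{*n}*z_+$ and apply the two-sided extension of \cite[Theorem 5]{AFK}, justified because the left tail of $F_\kappa$ is exponentially small by (\ref{eq:Fknegtail}). The resulting asymptotic is
\[
r_+(t)\sim\frac{\theta}{(1-\theta)^2}\Bigl(\int z_+\Bigr)g(t),
\]
which after the substitution $x=e^t$ gives the first line of (\ref{eq:<1asy}); the computation for $r_-$ is identical. For the positivity statement, the hypothesis $\p\{Ax+B=x\}<1$ excludes degenerate solutions; beyond that I expect the paper to prove a defective analogue of Theorem \ref{th:max=1} for $Y\stackrel{\mathcal{D}}{=}AY\vee|B|$, giving $\p\{Y>x\}\sim c_Y g(\log x)/x^\kappa$ with $c_Y>0$ whenever $B\not\equiv 0$, and to deduce positivity of the sum of the two constants in (\ref{eq:<1asy}) by the standard Goldie-type comparison between the linear and the max equations.

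The main obstacle is the third step. The cleanest form of \cite[Theorem 5]{AFK} is stated for tails of distributions on $[0,\infty)$, whereas here $F_\kappa$ is two-sided and the forcing $z_+$ is a signed $L^1$ function rather than a distribution tail. Pushing the $\Delta$-subexponential renewal asymptotic through this more general setup—and verifying that small-$n$ contributions, contributions from the left half-line of $F_\kappa$, and the tail of $z_+$ are all absorbed into an $o(g(t))$ remainder—is where the technical content concentrates.
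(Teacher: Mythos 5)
Your overall architecture matches the paper: tilt to the defective renewal equation with kernel $\theta F_\kappa$, invoke the local-subexponential asymptotic for $\sum_n\theta^n F_\kappa^{*n}$ from Asmussen--Foss--Korshunov, and get positivity by comparison with the max-equation. But there is a genuine gap in the middle, and you have mislocated the technical difficulty.

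You apply the key renewal theorem directly to $r_+ = \sum_{n\geq 0}\theta^n F_\kappa^{*n}*z_+$, which requires the forcing term $z_+$ to be directly Riemann integrable and to satisfy the pointwise decay $z_+(t)=o(g(t))$. Neither is available. The hypotheses only give you an $L^1$-weighted bound on $z_+$ (which is what the computation $\int z_+ = \kappa^{-1}\E[(AX+B)_+^\kappa-(AX)_+^\kappa]$ uses); they do not control $z_+$ pointwise. Your sketch for $z_+(t)=o(g(t))$ splits on $|B|\lessgtr \varepsilon e^t$: the large-$|B|$ piece is indeed $O(e^{(\kappa-\nu)t})$ by Markov, but the small-$|B|$ piece reduces to bounding $\p\{AX\in(e^t(1-\varepsilon),\,e^t(1+\varepsilon)]\}$, i.e.\ a local tail estimate for $AX$ --- exactly the kind of local information about $X$ that you are trying to prove, not assume. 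The tail-comparability condition on $F_\kappa$ says nothing about local tails of $AX$.

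This is precisely why the paper first passes through the smoothing transform $\hat g(s)=\int_{-\infty}^s e^{-(s-x)}g(x)\,\mathrm{d}x$: the smoothed forcing $\hat\psi$ satisfies $\hat\psi(s)=O(e^{-\delta s})$ by a Fubini step (the integral over $(0,e^s]$ eats the oscillation and produces the exponential rate), it is dRi by \cite[Lemma 9.2]{Goldie}, and $O(e^{-\delta s})=o(g(s))$ because $F_\kappa$ is subexponential. After applying Lemma \ref{lemma:ren<1} to the smoothed equation, one must then \emph{unsmooth}: from $\hat f(s)\sim c\,g(s)$ recover $f(s)\sim c\,g(s)$, which is the Tauberian sandwich
\[
c\,\frac{b-1}{b^{\kappa+1}-1}(\kappa+1)\le\liminf x^\kappa g(\log x)^{-1}\p\{X>x\}\le\limsup\le c\,\frac{1-a}{1-a^{\kappa+1}}(\kappa+1)
\]
as $a\uparrow 1$, $b\downarrow 1$. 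Your proposal omits both the smoothing and the unsmoothing; the two-sided extension of \cite[Theorem 5]{AFK} that you flag as the main obstacle is real but secondary --- the paper handles it via Lemma \ref{lemma:prop2} plus the exponential left tail $F_\kappa(-x)\leq e^{-\kappa x}$, and it is comparatively routine once the smoothing is in place.
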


Note that the condition $F_\kappa \in \mathcal{L}_\Delta$ with $\Delta=(0,1]$ implies 
that $g(\log x)$ is slowly varying. Indeed, for any $\lambda > 0$
\[
\frac{g(\log (\lambda x)) }{g(\log x)} = 
\frac{F_\kappa( \log x + \log \lambda + \Delta)}{F_\kappa( \log x + \Delta)} \to 1.
\]

The condition $F_\kappa \in \mathcal{S}_{loc}$ 
is much stronger than the corresponding regularly varying condition in Theorem 
\ref{th:=1}. Typical examples satisfying this condition are the Pareto, lognormal and 
Weibull (with parameter less than 1) distributions, see \cite[Section 4]{AFK}. For 
example in the Pareto case, i.e.~if for large enough $x$ we 
have $\overline F_\kappa(x) = c \, x^{-\beta}$ for some $c > 0, \beta > 0$, then
$g(x) \sim c \beta x^{-\beta - 1}$, and so
$\p \{ X > x \} \sim c' x^{-\kappa} (\log x)^{-\beta -1}$.
In the lognormal case, when $F_\kappa(x) = \Phi(\log x)$ for $x$ large enough,
with $\Phi$ being the standard normal df,
(\ref{eq:<1asy}) gives the asymptotic
$\p \{ X > x\} \sim c x^{-\kappa} e^{-(\log \log x)^2/2} / \log x$, $c > 0$.
Finally, for Weibull tails
$\overline F_\kappa(x) = e^{-x^\beta}$, $\beta \in (0,1)$, we obtain
$\p \{ X > x\} \sim c x^{-\kappa} (\log x)^{\beta-1} e^{-(\log x)^\beta}$,
$c > 0$.

\begin{theorem} \label{th:max<1}
Assume (\ref{eq:A-ass<1}), $\E |B|^\nu < \infty$ for some 
$\nu > \kappa$. Then for the tail of the solution of (\ref{eq:maxeq}) we have
\begin{equation} \label{eq:maxtail<1}
\lim_{x \to \infty} g(\log x)^{-1} x^\kappa \p \{ X > x \} =
\frac{\theta}{(1-\theta)^2 \kappa} \E [ (AX_+ \vee B_+)^\kappa - (AX_+)^\kappa ],
\end{equation}
where $g(x) =  F_\kappa (x+1) - F_\kappa(x)$.
\end{theorem}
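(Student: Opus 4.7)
The plan is to follow the proof of Theorem \ref{th:<1} essentially verbatim; the only genuine change is the replacement of the perpetuity excess $\p\{AX+B>x\}-\p\{AX>x\}$ by the maximum excess $\p\{AX\vee B>x\}-\p\{AX>x\}$. First I would derive the defective renewal equation for $r(u):=e^{\kappa u}\,\p\{X>e^u\}$ exactly as in Goldie's approach. For $x>0$ the elementary identity
\[
\p\{AX\vee B>x\}=\p\{AX>x\}+\p\{AX\le x<B\}
\]
combined with the change of measure (\ref{eq:Fkappa}) applied to the first summand gives
\[
r(u)=\theta\int_{\R}r(u-y)\,F_\kappa({\mathrm{d}} y)+D(u),\qquad D(u):=e^{\kappa u}\,\p\{AX_+\le e^u<B\};
\]
replacing $X$ by $X_+$ in the definition of $D$ is legitimate since for $x>0$ the event $\{AX\le x\}$ holds automatically on $\{X\le 0\}$.

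The second step is to control the driving function. A Fubini computation yields
\[
\int_{\R}D(u)\,{\mathrm{d}} u=\int_0^\infty x^{\kappa-1}\,\p\{AX_+\le x<B\}\,{\mathrm{d}} x=\frac{1}{\kappa}\,\E\bigl[(AX_+\vee B_+)^\kappa-(AX_+)^\kappa\bigr],
\]
which is bounded by $\kappa^{-1}\,\E B_+^\kappa<\infty$ because $\nu>\kappa$. Existence of a solution $X$ to (\ref{eq:maxeq}) with $\E X_+^\kappa<\infty$ follows from the representation $X=\sup_{n\ge 0}A_1\cdots A_n B_{n+1}$ together with $\theta<1$, so every quantity above is finite. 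Solving the defective renewal equation through the Neumann series $r=\sum_{n\ge 0}\theta^n F_\kappa^{*n}*D$ and invoking the local-subexponential version of Theorem 5 of \cite{AFK} used in the proof of Theorem \ref{th:<1} --- now with $D$ in place of the perpetuity driving function --- then yields, with $g(u)=F_\kappa(u+1)-F_\kappa(u)$,
\[
\frac{r(u)}{g(u)}\longrightarrow\frac{\theta}{(1-\theta)^2}\int_{\R}D(v)\,{\mathrm{d}} v=\frac{\theta}{(1-\theta)^2\kappa}\,\E[(AX_+\vee B_+)^\kappa-(AX_+)^\kappa],
\]
which is precisely (\ref{eq:maxtail<1}).

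The hard part is not the computation above but the verification that $D$ satisfies the tail-regularity conditions required by the local-subexponential defective renewal theorem, so that the higher-order terms in the Neumann series contribute at the correct order $g(u)$; this is handled exactly as in Theorem \ref{th:<1}, exploiting the uniformity assumption $\sup_{y>x}F_\kappa(y+\Delta)=O(F_\kappa(x+\Delta))$ from (\ref{eq:A-ass<1}). Unlike Theorem \ref{th:<1}, no positivity claim needs to be proved: the right-hand side of (\ref{eq:maxtail<1}) vanishes precisely when $B_+\le AX_+$ almost surely (for instance when $B\le 0$ a.s.), in line with Goldie's observation that the maximum equation has a Pareto-type tail only when $\p\{B>0\}>0$.
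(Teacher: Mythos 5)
Your proposal is essentially the paper's own argument, just with Theorem~\ref{th:impren<1} unwound inline: the paper simply checks the integrability hypothesis of Theorem~\ref{th:impren<1} (using that $|\p\{AX\vee B>x\}-\p\{AX>x\}|=\p\{AX\vee B>x\ge AX\}$ and Fubini, which yields the bound $(\kappa+\delta)^{-1}\E B_+^{\kappa+\delta}$) and then cites that theorem, whereas you rederive the defective renewal equation and invoke Lemma~\ref{lemma:ren<1} directly; the identification of the constant and the reduction to $X_+$ are identical in both. One small imprecision: to feed the renewal machinery (smoothing plus Lemma~\ref{lemma:ren<1}, which needs $\hat D(s)=o(g(s))$) you need the $(\kappa+\delta)$-moment bound $\int_0^\infty\p\{AX_+\le x<B\}\,x^{\kappa+\delta-1}\,{\mathrm d}x\le(\kappa+\delta)^{-1}\E B_+^{\kappa+\delta}<\infty$ for some $\delta\in(0,\nu-\kappa)$, not merely the $\kappa$-moment bound you record, though this is an immediate fix and the finiteness assumption $\E X_+^\kappa<\infty$ you mention is not actually required.
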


In the special case $B \equiv 1$ we have the following.

\begin{corollary} \label{cor:max}
Let $\log A_1, \log A_2, \ldots$ be independent copies of $\log A$, let 
$S_n = \log A_1 + \log A_2 + \ldots + \log A_n$ denote their partial sum, and
$M = \max \{0, S_1, S_2, \ldots \}$.
Assume (\ref{eq:A-ass<1}). Then
\[ 
\p \{ M > x \} \sim c g(x) e^{-\kappa x},
\] 
with some $c > 0$, where $g(x) = F_\kappa (x+1) - F_\kappa(x)$.
\end{corollary}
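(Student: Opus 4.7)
The plan is to deduce Corollary \ref{cor:max} from Theorem \ref{th:max<1} applied to equation (\ref{eq:maxeq}) in the special case $B \equiv 1$.

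First, I would identify the solution of $X \stackrel{\mathcal{D}}{=} AX \vee 1$ explicitly: iterating the recursion and exploiting stationarity,
\[
X = 1 \vee A_1 \vee A_1 A_2 \vee \cdots = e^M,
\]
with $M = \max\{0, S_1, S_2, \ldots\}$ and $S_n = \log A_1 + \cdots + \log A_n$. The walk $S_n$ drifts to $-\infty$: Jensen's inequality applied to the concave $\log$ yields $\E \log A \leq \kappa^{-1} \log \theta < 0$, and even when $\E \log A = -\infty$ one still has $S_n \to -\infty$ a.s.\ because $\E (\log A)_+ < \infty$ (a consequence of $\E A^\kappa < \infty$). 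Hence $M < \infty$ a.s.\ and $X = e^M$ is a bona fide solution. The assumption $\E |B|^\nu < \infty$ is trivial.

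Next, Theorem \ref{th:max<1} applied at the level $y = e^x$, using $X_+ = X$ (since $X \geq 1$) and $B_+ = 1$, gives
\[
\p\{M > x\} = \p\{X > e^x\} \sim c\, g(x)\, e^{-\kappa x},\qquad
c = \frac{\theta}{(1-\theta)^2 \kappa}\, \E\bigl[(AX \vee 1)^\kappa - (AX)^\kappa\bigr].
\]

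The remaining step, which I expect to be the main obstacle, is to verify $c > 0$. Since $X \geq 1$,
\[
(AX \vee 1)^\kappa - (AX)^\kappa = \bigl(1 - (AX)^\kappa\bigr)\mathbf{1}\{AX < 1\},
\]
so positivity of $c$ is equivalent to $\p\{\log A + M < 0\} > 0$, with $\log A$ and $M$ independent. I would combine two observations: (i) $\E A^\kappa = \theta < 1$ together with $A \not\equiv 1$ forces $\p\{A < 1\} > 0$, so $\p\{\log A < -\epsilon\} > 0$ for some $\epsilon > 0$; (ii) $\mathrm{essinf}\, M = 0$. For (ii) I would use the fixed-point identity $M \stackrel{\mathcal{D}}{=} (M + \log A)^+$ with an independent copy $M'$ of $M$: if $m := \mathrm{essinf}\, M > 0$, then $(M' + \log A)^+ \geq m$ a.s.\ forces $M' + \log A \geq m$ a.s.; but by definition of essinf $\p\{M' < m + \delta\} > 0$ for every $\delta > 0$, and combining with independence, if $\p\{\log A < -\delta\} > 0$ one finds $\p\{M' + \log A < m\} \geq \p\{M' < m+\delta\}\, \p\{\log A < -\delta\} > 0$, a contradiction. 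Hence $\log A \geq 0$ a.s., contradicting (i), so $\mathrm{essinf}\, M = 0$ and $\p\{M < \epsilon\} > 0$. Combining (i) and (ii),
\[
\p\{\log A + M < 0\} \geq \p\{\log A < -\epsilon\}\, \p\{M < \epsilon\} > 0,
\]
which yields $c > 0$ and completes the proof.
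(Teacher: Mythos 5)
Your proof is correct and follows the route the paper intends: identify the solution of $X \stackrel{\mathcal{D}}{=} AX \vee 1$ as $X = e^M$, apply Theorem~\ref{th:max<1}, and then check that the resulting constant is strictly positive. The paper only \emph{asserts} positivity in the proof of Theorems~\ref{th:max=1}--\ref{th:max<1} (``Note that for $B\equiv 1$ \ldots the constant is strictly positive''), so your main contribution is supplying that verification. Your reduction of positivity to $\p\{\log A + M < 0\} > 0$ is exactly right, as is step (i): $\E A^\kappa < 1$ forces $\p\{A<1\}>0$. Step (ii) is also correct, but it can be shortened: rather than arguing via $\mathrm{essinf}\,M=0$ through the fixed-point identity $M \stackrel{\mathcal D}{=} (\log A + M)^+$, one can invoke the stronger standard fact that a random walk with a.s.\ drift to $-\infty$ satisfies $\p\{M=0\}>0$ (the number of strict ascending ladder epochs is a.s.\ finite, hence zero with positive probability), which gives $\p\{\log A + M < 0\} \geq \p\{\log A < 0\}\,\p\{M = 0\} > 0$ directly. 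Your preliminary observation that $S_n \to -\infty$ a.s.\ (via $\E(\log A)_+ < \infty$ from $\E A^\kappa < \infty$, plus Jensen when the mean is finite) is also correct and matches what the paper implicitly uses.
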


From Goldie's unified method, it is clear that
the asymptotic behavior of the solution $X$ of (\ref{eq:equation}) is closely related to 
the maximum $M$ of the corresponding random walk $S_n$. 
The assumption $\E A^\kappa = 1$ together with $A \not\equiv 1$ imply 
that $\E \log A < 0$, so the 
random walk tends to $- \infty$, thus $M$ is $\p$-a.s.~finite. Assuming 
(\ref{eq:assump-2}) Korshunov \cite{Kor} showed for $\alpha > 1/2$
(all he needs is the SRT, so the same holds under (\ref{eq:Doney-cond}) for 
$\alpha \in (0,1)$) that for some constant $c > 0$
\[
\lim_{x \to \infty} \p \{ M > x \} e^{\kappa x} m(x)   = c.
\] 
Thus Theorem \ref{th:max=1} contains Korshunov's result 
\cite{Kor}. However, note that Korshunov obtained the corresponding liminf result 
in (\ref{eq:maxtail}), when the SRT does not hold. With our method the liminf result does 
not follow due to the smoothing transform (\ref{eq:def-smooth}). The problem is to 
`unsmooth' the liminf version of (\ref{eq:hat-f-asy}). The same difficulty appears in the 
perpetuity case.

In specific cases the connection between $M$ and $X$ are even more transparent. Let 
$(\xi_t)_{t \geq 0}$ be a 
nonmonotone L\'evy process, which tends to $-\infty$. Consider its exponential 
functional $J= \int_0^\infty e^{\xi_t} {\mathrm{d}} t$ and its supremum
$\overline \xi_\infty = \sup_{t \geq 0} \xi_t$. Arista and Rivero \cite[Theorem 4]{AR} 
showed that $\p \{ J > x\}$ is regularly varying with parameter $-\alpha$ 
if and only if $\p \{ e^{\overline \xi_\infty} > x\}$ is regularly varying with the same 
parameter. Now, if $(\xi_t)$ has finite jump activity and 0 drift, then conditioning on 
its first jump time one has the perpetuity equation
\[
J \stackrel{\mathcal{D}}{=} A J + B, 
\]
with $B$ being an exponential random variable independent of $A$, and 
$\log A$ is the jump  size.
From this we see that in this special case, Theorem \ref{th:=1}/\ref{th:<1} and  
Theorem \ref{th:max=1}/\ref{th:max<1} with $B \equiv 1$ are equivalent.

Finally, we note that using Alsmeyer's sandwich method \cite{Alsmeyer} it is possible to 
apply our results to iterated function systems.

\section{Proofs}

First, we prove the analogue of Goldie's implicit renewal theorem \cite[Theorem 
2.3]{Goldie} in both cases.

\begin{theorem} \label{th:impren}
Assume (\ref{eq:A-ass}), and for some random variable $X$
\[
\begin{split}
\int_0^\infty | \p \{ X > x\} - \p \{ AX > x\} | x^{\kappa + \delta -1} {\mathrm{d}} x < 
\infty 
\end{split}
\]
for some $\delta > 0$, where $X$ and $A$ are independent. Then
\[
\lim_{x \to \infty} m(\log x) x^\kappa \p \{ X > x \} = C_\alpha  
\int_0^\infty [ \p \{ X > x\} - \p \{ AX > x\} ] x^{\kappa -1} {\mathrm{d}} x.
\]
\end{theorem}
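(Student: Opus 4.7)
The plan is to follow Goldie's argument \cite{Goldie}, replacing the classical key renewal theorem by the infinite-mean analogue from Lemma \ref{lemma:keyren} and letting $m(\cdot)$ carry the normalization. Setting $x = e^t$ and
\[
f(t) = e^{\kappa t}\,\p\{X > e^t\}, \qquad g(t) = e^{\kappa t}\bigl[\p\{X > e^t\} - \p\{AX > e^t\}\bigr],
\]
conditioning on $A$ yields $e^{\kappa t}\,\p\{AX > e^t\} = \E[A^\kappa f(t - \log A)\,I(A > 0)]$, which by (\ref{eq:alphameasure}) and $\E A^\kappa = 1$ equals $\int f(t-s)\,F_\kappa({\mathrm{d}} s)$. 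Hence
\[
f(t) = \int_{-\infty}^\infty f(t-s)\,F_\kappa({\mathrm{d}} s) + g(t),
\]
a renewal equation on the line. Since $F_\kappa$ has positive drift under $\p_\kappa$ (so the associated random walk tends to $+\infty$) and $f$ is bounded by a trivial Markov argument, iteration gives $f = g * U$ with $U$ the renewal measure of $F_\kappa$.

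Integrability of $g$ is easy: the bound $|g(s)| \leq 2 e^{\kappa s}$ handles $(-\infty, 0]$, while the hypothesis, rewritten via $x = e^s$, furnishes
\[
\int_0^\infty |g(s)|\,e^{\delta s}\,{\mathrm{d}} s < \infty,
\]
an exponential weight that will also supply the tail decay required by Lemma \ref{lemma:keyren}. The goal is then
\[
\lim_{t \to \infty} m(t)\,(g * U)(t) = C_\alpha \int_{-\infty}^\infty g(s)\,{\mathrm{d}} s,
\]
which, after the back-substitution $x = e^s$, equals $C_\alpha \int_0^\infty [\p\{X > x\} - \p\{AX > x\}]\,x^{\kappa-1}\,{\mathrm{d}} x$, the desired formula.

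Because Lemma \ref{lemma:keyren} requires a direct Riemann integrable (or sufficiently regular) convolutor and $g$ need not qualify, a smoothing transform as in Goldie must be interposed. I would pass to $\hat g(t) = \int_0^\infty e^{-u}\,g(t-u)\,{\mathrm{d}} u$ (the paper's (\ref{eq:def-smooth})) and the corresponding $\hat f$, which inherits the renewal equation $\hat f = \hat g + \hat f * F_\kappa$ with a d.R.i.\ right-hand side. Lemma \ref{lemma:keyren} then delivers $m(t)\hat f(t) \to C_\alpha \int \hat g(s)\,{\mathrm{d}} s$, and Fubini identifies $\int \hat g(s)\,{\mathrm{d}} s = \int g(s)\,{\mathrm{d}} s$.

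The main obstacle will be the unsmoothing Tauberian step that transfers this asymptotic from $\hat f$ back to $f$. Monotonicity of $t \mapsto \p\{X > e^t\}$ makes $e^{-\kappa t} f(t)$ monotone, so $\hat f$ sandwiches $f$ up to a factor $(1 \pm \epsilon)$ after a small shift in $t$; combined with $m(t+c)/m(t) \to 1$ from the slow variation of $m$, this recovers $m(t) f(t) \to C_\alpha \int g(s)\,{\mathrm{d}} s$. Carrying out this sandwich cleanly when the normalization is a slowly varying function rather than a positive constant is the main technical point of the proof, and it is precisely here that Goldie's smoothing and the infinite-mean setting interact most delicately.
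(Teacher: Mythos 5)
Your outline reproduces the paper's proof nearly step for step: pass to the renewal equation under $\p_\kappa$, interpose Goldie's exponential smoothing transform (\ref{eq:def-smooth}) so that $\hat\psi$ is dRi and satisfies the growth condition of Lemma~\ref{lemma:keyren}, conclude $m(s)\hat f(s)\to C_\alpha\int\psi$, and unsmooth with a Tauberian sandwich exploiting the monotonicity of $\p\{X>e^t\}$ and the slow variation of $m(\log x)$. The only false note is the side remark that ``$f$ is bounded by a trivial Markov argument''---$f(t)=e^{\kappa t}\p\{X>e^t\}$ need not be bounded under the stated hypotheses, since $\E X_+^\kappa<\infty$ is not assumed---but this claim is not load-bearing, as the iteration (and the vanishing of the remainder $\E_\kappa\hat f(s-S_n)$, via $S_n\to-\infty$ under $\p$ and dominated convergence) is carried out at the level of $\hat f$ exactly as in the paper.
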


\begin{proof}
We follow closely Goldie's proof. Put
\begin{equation} \label{eq:def-psi}
\psi(x) = e^{\kappa x} (\p \{ X > e^x \} - \p \{ AX > e^x \}), \ 
f(x) = e^{\kappa x} \p \{ X > e^x \}.
\end{equation}
Using that $X$ and $A$ are independent we obtain the equation
\begin{equation} \label{eq:ren0}
f(x) = \psi(x) + \E f(x - \log A) A^\kappa. 
\end{equation}
Using the definition (\ref{eq:alphameasure}) we see that
$\E_\kappa g(\log A) = \E (g(\log A) A^\kappa)$ thus under the new measure equation 
(\ref{eq:ren0}) reads as
\begin{equation} \label{eq:renewal}
f(x) = \psi(x) + \E_\kappa f(x - \log A).
\end{equation}
We have to introduce a smoothing transformation, since the function
$\psi$ is not necessarily directly Riemann integrable (dRi), which is needed to 
apply the key renewal theorem.
Introduce the smoothing transform of $g$ as
\begin{equation} \label{eq:def-smooth}
\hat g(s) = \int_{-\infty}^s e^{-(s-x)} g(x) {\mathrm{d}} x. 
\end{equation}
Applying this transform to both sides of (\ref{eq:renewal})
we get the renewal equation
\begin{equation} \label{eq:ren-smooth}
\hat f(s) = \hat \psi(s) + \E_\kappa \hat f(s - \log A).
\end{equation}
Iterating (\ref{eq:ren-smooth}) we obtain for any $n \geq 1$
\begin{equation} \label{eq:iter}
\hat f(s) = \sum_{k=0}^{n-1} \int_\R \hat \psi(s - y ) F_\kappa^{*k}({\mathrm{d}} y)
+ \E_\kappa \hat f(s - S_n),
\end{equation}
where $\log A_1, \log A_2, \ldots $ are independent copies of $\log A$ (under $\p$ and 
$\p_\kappa$), independent of $X$, and 
$S_n = \log A_1 + \ldots + \log A_n$. Since $S_n \to -\infty$ $\p$-a.s.
\[
\E_\kappa \hat f(s - S_n) = e^{-s} \int_{-\infty}^s e^{(\kappa + 1) y}
\p \{ X e^{S_n} > e^y \} {\mathrm{d}} y \to 0 \quad \text{as } n \to \infty,
\]
where we also used that $\E_\kappa g(S_n) = \E g(S_n) e^{\kappa S_n}$.
Therefore as $n \to \infty$ from (\ref{eq:iter}) we have
\begin{equation} \label{eq:smoothrenew}
\hat f(s) = \int_\R \hat \psi(s-y) U({\mathrm{d}} y), 
\end{equation}
where $U(x) = \sum_{n=0}^\infty F_\kappa^{*n}(x)$ is the renewal function of 
$F_\kappa$. The question is under what conditions of $z$ the key 
renewal theorem
\begin{equation} \label{eq:int-asy}
m(x) \int_\R z(x-y) U({\mathrm{d}} y) \to C_\alpha \int_\R z(y) {\mathrm{d}} y
\end{equation}
holds.
In the following lemma, which is a modification of Erickson's Theorem 3 
\cite{Eri70}, we give sufficient condition for $z$ to
(\ref{eq:int-asy}) hold. We note that both in Lemma 3 \cite{Rivero} and in Theorem 4 of 
\cite{VT} the authors wrongly claim that (\ref{eq:int-asy}) holds if $z$ is
dRi. A counterexample is given in the Appendix.
The same statement under less restrictive condition is shown using stopping time argument 
in \cite[Proposition 6.4.2]{Iksanov}.  
For the sake of completeness we give a proof here.

\begin{lemma} \label{lemma:keyren}
Assume that $z$ is dRi, $z(x) = O(x^{-1})$ as $x \to \infty$, and (\ref{eq:A-ass}) holds.
Then (\ref{eq:renewal-alpha}) implies (\ref{eq:int-asy}).
\end{lemma}

\begin{proof}
Using the decomposition $z= z_+ - z_-$ we may and do assume that $z$ is nonnegative.
Write
\[
\begin{split}
& m(x) \int_\R z(x-y) U({\mathrm{d}} y) \\
& = m(x) \left[ \int_x^\infty z(x-y) U({\mathrm{d}} y) +
\int_0^x z(x-y) U({\mathrm{d}} y) +  \int_{-\infty}^0 z(x-y) U({\mathrm{d}} y) \right]\\
&=:I_1(x) + I_2(x) + I_3(x).
\end{split}
\]

We first show that $I_1(x) \to C_\alpha \int_{-\infty}^0 z(y) {\mathrm{d}} y$ whenever $z$ 
is dRi.
Fix $h > 0$ and put $z_k(x) = I(x \in  [(k-1)h, kh))$, 
$a_k = \inf \{z(x) : x \in  [(k-1)h, kh) \}$, and
$b_k = \sup \{z(x) : x \in  [(k-1)h, kh) \}$, $k \in \Z$.
Simply
\[
m(x) \sum_{k=-\infty}^{0} a_k (U * z_k)(x) \leq I_1(x) \leq 
m(x) \sum_{k=-\infty}^{0} b_k (U * z_k)(x) .
\]
As $x \to \infty$ by (\ref{eq:renewal-alpha}) for any fixed $k$
\[
\begin{split}
m(x)  (U * z_k)(x)
& = \frac{m(x)}{m(x-kh)} m(x-kh) [ U(x - kh + h) - U(x - k h) ] 
\to C_\alpha h,
\end{split}
\]
where the convergence $m(x)/m(x-kh) \to 1$
follows from the fact that $m$ is regularly varying with index $1 - \alpha$.
Since $m$ is nondecreasing and $k \leq 0$ this also gives us an integrable majorant
uniformly in $k \leq 0$, i.e.~for $x$ large enough
$\sup_{k < 0}  m(x)  (U * z_k)(x) \leq 2 C_\alpha h$.
Thus by Lebesgue's dominated convergence theorem
\[
\lim_{x \to \infty} m(x) \sum_{k=-\infty}^{0} a_k (U * z_k)(x) =
C_\alpha \sum_{k= -\infty}^0 a_k  h, 
\]
and similarly for the upper bound.
Since $z$ is dRi the statement follows.

The convergence $I_2 (x) \to C_\alpha \int_0^\infty z(x) {\mathrm{d}} x$
follows exactly as in the proof of \cite[Theorem 3]{Eri70}, since in that proof 
only formula (\ref{eq:renewal-alpha}) and its consequence $U(x) \sim C_\alpha x / 
(\alpha m(x))$ are used.

Finally, we show that $I_3(x) \to 0$. Indeed,
\[
\begin{split}
m(x) \int_{-\infty}^{0} z(x-y) U({\mathrm{d}} y)  
\leq K m(x)  \int_{-\infty}^{0} (x-y)^{-1} U({\mathrm{d}} y) 
\leq K \frac{m(x)}{x} U(0) \to 0,
\end{split}
\]
where $K \geq \sup_{x > 0} x z(x)$.
\end{proof}

Recall the definition of $\psi$ in (\ref{eq:def-psi}).
Next we show that
$\hat \psi$ satisfies the condition of Lemma \ref{lemma:keyren}.
Indeed,
\begin{equation} \label{eq:psi-est}
\begin{split}
\hat \psi(s) & = e^{-s} \int_{-\infty}^s e^{(\kappa+1) x} 
[\p \{ X > e^x\} - \p \{ AX > e^x \} ] {\mathrm{d}} x  \\
& \leq e^{-s} \int_0^{e^s} y^\kappa | \p \{ X > y \} - \p \{ AX > y \} | {\mathrm{d}} y \\
& \leq e^{- \delta s} \int_0^\infty y^{\kappa + \delta - 1}
| \p \{ X > y \} - \p \{ AX > y \} | {\mathrm{d}} y,
\end{split}
\end{equation}
and the last integral is finite due to our assumptions. The same calculation shows that
\[
\int_\R \hat \psi(s) {\mathrm{d}} s = \int_\R \psi(x) {\mathrm{d}} x = 
\int_0^\infty y^{\kappa - 1} [\p \{ X > y \} - \p \{ AX > y \} ] {\mathrm{d}} y.
\]
It follows from \cite[Lemma 9.2]{Goldie} that $\hat \psi$ is dRi, thus
from Lemma \ref{lemma:keyren} and (\ref{eq:psi-est}) we obtain that
for the solution of (\ref{eq:smoothrenew})
\begin{equation} \label{eq:hat-f-asy}
\lim_{s \to \infty} m(s) \hat f(s) = C_\alpha \int_\R \psi(y) {\mathrm{d}} y =: c.
\end{equation}
From (\ref{eq:hat-f-asy}) the statement follows again in the same way as in
\cite[Lemma 9.3]{Goldie}. Indeed, since $m(x)$ is regularly varying $m(\log x)$ is slowly 
varying, thus from  (\ref{eq:hat-f-asy}) we obtain for any $0 < a < 1 < b < \infty$
\[
m(\log x) \frac{1}{x} \int_{ax}^{bx} y^\kappa \p \{ X > y \} {\mathrm{d}} y \to (b-a) c.
\]
With $a = 1 < b$ and $a < 1 = b$ it follows that
\[
\begin{split}
c \frac{b-1}{b^{\kappa + 1} - 1} (\kappa + 1) & \leq
\liminf_{x \to \infty} x^\kappa m(\log x) \p \{ X > x \} \\
&\leq \limsup_{x \to \infty} x^\kappa m(\log x) \p \{ X > x \} 
\leq c \frac{1-a}{1- a^{\kappa + 1} } (\kappa + 1).
\end{split}
\]
As $a \uparrow 1, b \downarrow 1$ the convergence follows.
\end{proof}

\begin{theorem} \label{th:impren<1}
Assume (\ref{eq:A-ass<1}), and for some random variable $X$
\[
\begin{split}
\int_0^\infty | \p \{ X > x\} - \p \{ AX > x\} | x^{\kappa + \delta -1} {\mathrm{d}} x < 
\infty 
\end{split}
\]
for some $\delta > 0$, where $X$ and $A$ are independent. Then
\[
\lim_{x \to \infty} g(\log x)^{-1} x^\kappa \p \{ X > x \} = 
\frac{\theta}{(1-\theta)^2}
\int_0^\infty [ \p \{ X > x\} - \p \{ AX > x\} ] x^{\kappa -1} {\mathrm{d}} x.
\]
\end{theorem}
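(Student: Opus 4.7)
The plan is to adapt the proof of Theorem \ref{th:impren}, replacing the infinite-mean key renewal theorem by the convolution asymptotics available under local subexponentiality, via a suitable extension of Theorem 5 of \cite{AFK}. First, I would define $\psi$ and $f$ as in (\ref{eq:def-psi}). Independence of $X$ and $A$ gives $f(x) = \psi(x) + \E[f(x - \log A) A^\kappa]$, and since $\E[h(\log A) A^\kappa] = \theta \E_\kappa h(\log A)$ by the definition of $\p_\kappa$ in (\ref{eq:A-ass<1}), this is the \emph{defective} renewal equation
\[
f(x) = \psi(x) + \theta\, \E_\kappa f(x - \log A).
\]
Iterating $n$ times and using the bound $f(x) \le e^{\kappa x}$ together with $\theta^n \E_\kappa e^{\kappa S_n} = \theta^n \cdot \theta^n \cdot e^{\kappa x}\cdot(\text{stuff})$... more cleanly, $\theta^n \E_\kappa f(x - S_n) = \E[f(x - S_n) e^{\kappa S_n}] \theta^n / \theta^n$, which tends to $0$ by the same argument as in Theorem \ref{th:impren} combined with $\theta < 1$. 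Passing to the limit yields
\[
f(x) = \int_\R \psi(x - y)\, V(\mathrm{d}y), \qquad V = \sum_{k=0}^\infty \theta^k F_\kappa^{*k},
\]
the defective renewal measure associated with $F_\kappa$ and defect $1 - \theta$.

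Next I would establish the two-stage asymptotic. Because $F_\kappa \in \mathcal{S}_{loc}$, one has $F_\kappa^{*k}(x + \Delta) \sim k F_\kappa(x + \Delta) = k\, g(x)$ for every fixed $k$, and the extra assumption $\sup_{y > x} F_\kappa(y+\Delta) = O(F_\kappa(x+\Delta))$ plus the exponential left tail (\ref{eq:Fknegtail}) provide the uniform bounds needed to sum over $k$ with geometric weights, giving
\[
V(x + \Delta) \;\sim\; \sum_{k=1}^\infty k\, \theta^k \cdot g(x) \;=\; \frac{\theta}{(1-\theta)^2}\, g(x).
\]
The hypothesis $\int_0^\infty |\p\{X > x\} - \p\{AX > x\}| x^{\kappa + \delta - 1}\,\mathrm{d}x < \infty$ translates to $\int_\R |\psi(s)| e^{\delta s}\,\mathrm{d}s < \infty$, so $\psi \in L^1(\R)$ with super-exponential control of its right tail. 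From the slowly varying character of $g(\log \cdot)$ (remarked after Theorem \ref{th:<1}) and the local uniformity of the $V(x+\Delta)$ asymptotic, I would deduce the convolution asymptotic
\[
(\psi * V)(x) \;\sim\; \frac{\theta}{(1-\theta)^2}\, g(x) \int_\R \psi(s)\,\mathrm{d}s,
\]
splitting $\psi = \psi_+ - \psi_-$ and using dominated convergence on small $\Delta$-windows. Converting back via $x \mapsto e^x$ and $\int_\R \psi(s)\,\mathrm{d}s = \int_0^\infty [\p\{X > y\} - \p\{AX > y\}] y^{\kappa - 1}\,\mathrm{d}y$ yields the theorem.

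The main obstacle will be the convolution asymptotic $(\psi * V)(x) \sim \frac{\theta}{(1-\theta)^2} g(x) \int \psi$. Two non-routine points need attention: $F_\kappa$ is supported on all of $\R$ rather than on $[0,\infty)$ as in \cite{AFK}, so the standard $\Delta$-subexponential convolution lemmas must be extended (which is why the exponential left tail (\ref{eq:Fknegtail}) is highlighted); and one needs the asymptotic for a convolution against an $L^1$ function $\psi$ rather than against a distribution. The uniformity condition $\sup_{y > x} F_\kappa(y + \Delta) = O(F_\kappa(x+\Delta))$ is tailor-made to provide a dominating bound uniformly in the convolution argument, which should let the routine splitting of $\psi$ into a central piece and tails go through. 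In contrast with Theorem \ref{th:impren}, no smoothing transform is needed because local subexponentiality replaces the direct Riemann integrability role played by $\hat\psi$ in Goldie's approach.
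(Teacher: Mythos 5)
Your overall strategy matches the paper's: pass to the defective renewal equation $f = \psi + \theta\,\E_\kappa f(\cdot - \log A)$, represent $f$ as a convolution with the defective renewal measure, and use the $\Delta$-subexponential asymptotic $\sum_{k}k\theta^k = \theta/(1-\theta)^2$. However, there is a genuine gap in the last paragraph, where you assert that \emph{no smoothing transform is needed because local subexponentiality replaces the direct Riemann integrability role played by $\hat\psi$}. This is not correct, and the paper in fact does use the smoothing transform (\ref{eq:def-smooth}) in the proof of Theorem~\ref{th:impren<1} exactly as in Theorem~\ref{th:impren}.

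The problem is that the hypothesis $\int_0^\infty |\p\{X>y\}-\p\{AX>y\}|\,y^{\kappa+\delta-1}\,{\mathrm d}y<\infty$ translates only into the integral bound $\int_\R |\psi(s)|\,e^{\delta s}\,{\mathrm d}s<\infty$; it gives no pointwise control on $\psi$, and $\psi$ need not be directly Riemann integrable, continuous, or even bounded. Your claimed convolution asymptotic $(\psi*V)(x)\sim \frac{\theta}{(1-\theta)^2}g(x)\int\psi$ is a key renewal theorem, and such results require the convolved function to be dRi (with, in this defective setting, the additional growth condition $z=o(g)$, cf.~Lemma~\ref{lemma:ren<1}). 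The local-uniformity of $V(x+\Delta)\sim \frac{\theta}{(1-\theta)^2}g(x)$ controls the measure $V$, not the integrand: if $\psi$ has narrow spikes, the convolution can pick them up through the atom of $V$ at $0$ (note $V(\{0\})\geq 1$ since the $k=0$ term is $\delta_0$), producing a lower bound $(\psi*V)(x)\geq \psi(x)$ that can be of a larger order than $g(x)$ along a sparse sequence while $\int |\psi|e^{\delta s}\,{\mathrm d}s$ remains finite. This is the same phenomenon as the counterexample in the Appendix, and it is precisely why Goldie's smoothing transform is indispensable: $\hat\psi$ \emph{does} satisfy the pointwise bound $\hat\psi(s)=O(e^{-\delta s})$ via the estimate (\ref{eq:psi-est}) and is dRi by \cite[Lemma 9.2]{Goldie}, so Lemma~\ref{lemma:ren<1} applies to $\hat\psi$, and then one has to unsmooth using the slow variation of $g(\log\cdot)$ as in Theorem~\ref{th:impren}. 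To repair your argument, insert the smoothing step: define $\hat\psi,\hat f$, derive $\hat f=\hat\psi*V$, verify $\hat\psi$ is dRi with $\hat\psi=O(e^{-\delta\cdot})=o(g)$, apply the defective key renewal lemma to $\hat\psi$, and unsmooth at the end.
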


\begin{proof}
Following the same steps as in the proof of Theorem \ref{th:impren}
we obtain
\[
\hat f(s) = \int_\R \hat \psi(s-y) U({\mathrm{d}} y), 
\]
where $U$ is the defective renewal function 
$U(x) = \sum_{n=0}^\infty (\theta F_\kappa)^{*n}(x)$. Since $\theta < 1$ we have
$U(\R) = (1 -\theta)^{-1} < \infty$. A modification of Theorem 5 \cite{AFK} gives the 
following. Recall $g$ from Theorem \ref{th:<1}.

\begin{lemma} \label{lemma:ren<1}
Assume (\ref{eq:A-ass<1}), $z$ is dRi, and $z(x) = o(g(x))$. Then
\[
\int_\R z(x-y) U({\mathrm{d}} y) \sim \theta g(x) \frac{\int_\R z(y) {\mathrm{d}} y}{(1 - 
\theta)^2}. 
\]
\end{lemma}

\begin{proof}
By the decomposition $z= z_+ - z_-$, we may and do assume that $z$ is nonnegative.
We again split the integral 
\[
\int_\R z(x-y) U({\mathrm{d}} y) = I_1(x) + I_2(x) + I_3(x),
\]
where $I_1, I_2$, and $I_3$ are the integrals on $(x,\infty)$, $(0,x]$, and on 
$(-\infty, 0]$, respectively.

The asymptotics $I_1(x) \sim \theta g(x) \int_{-\infty}^0 z(y) {\mathrm{d}} y / 
(1-\theta)^2$ 
follows along the same 
lines as in the proof of Lemma \ref{lemma:keyren}. Theorem 5(i) \cite{AFK} gives
$I_2(x) \sim \theta g(x) \int_0^\infty z(y) {\mathrm{d}} y / (1 - \theta)^2$. (In the 
Appendix we 
explain 
why the results for $\Delta$-subexponential distributions on $[0,\infty)$  
remain true in 
our case.) Finally, for $I_3$ we have
\[
I_3(x) \leq U(0) \, \sup_{y \geq x} z(y) = o(g(x)),
\]
where we use that $\sup_{y \geq x} F_\kappa(y + \Delta) = O(F_\kappa(x+\Delta))$.
\end{proof}

As in (\ref{eq:psi-est}) we have $\hat \psi(x) = O(e^{-\delta x})$ for some 
$\delta > 0$. Since $F_\kappa$ is subexponential
$\hat \psi(x) = o(g(x))$. That is, the condition of Lemma \ref{lemma:ren<1} 
holds, and we obtain the asymptotic
\[
\hat f(s) \sim  \theta g(s) \frac{\int_\R \psi(y) {\mathrm{d}} y}{(1 - \theta)^2}, \quad s 
\to 
\infty.
\]
Since $g(x)$ is subexponential, $g(\log x)$ is slowly varying, and the
proof can be finished  in exactly the same way as in Theorem \ref{th:impren}.
\end{proof}

Now the proofs of Theorems \ref{th:=1}, \ref{th:<1}, \ref{th:max=1}, \ref{th:max<1} are 
applications of the corresponding implicit renewal theorem.

\begin{proof}[Proofs of Theorems \ref{th:max=1} and \ref{th:max<1}]
The existence of the unique solution of (\ref{eq:maxeq}) follows from 
\cite[Proposition 5.1]{Goldie}. Choose $\delta \in (0,\nu - \kappa)$.
Since $| \p \{ AX \vee B > x\} - \p \{ AX > x\} | = \p \{ AX \vee B > x \geq AX \}$,
Fubini's theorem gives
\[
\begin{split}
& \int_0^\infty | \p \{ AX \vee B > x\} - \p \{ AX > x\} | x^{\kappa + \delta -1} 
{\mathrm{d}} x
= \int_0^\infty \p \{ AX \vee B > x \geq AX \} x^{\kappa + \delta -1} {\mathrm{d}} x  \\
& = (\kappa + \delta)^{-1} \E [ (AX \vee B)_+^{\kappa+\delta} - (AX)_+^{\kappa+\delta} ]
\leq (\kappa + \delta)^{-1} \E B_+^{\kappa + \delta}.
\end{split}
\]
Therefore (\ref{eq:maxtail}) and (\ref{eq:maxtail<1}) follows from Theorem 
\ref{th:impren} and \ref{th:impren<1}, respectively.
The form of the limit constant follows similarly. Note that for $B \equiv 1$, i.e.~when 
$\log X = M$ is the maximum of a random walk with negative drift, the constant is 
strictly positive.
\end{proof}

\begin{proof}[Proofs of Theorems \ref{th:=1} and \ref{th:<1}]
The existence of the unique solution of (\ref{eq:equation}) is well-known.
Let us choose $\delta > 0$ so small that
\begin{equation} \label{eq:eps-cond}
\kappa + \frac{3 \kappa \delta}{1-\delta} < \nu \text{ for } \kappa \geq 1,
\text{ and } \kappa + \delta \leq \min\{1, \nu\} \text{ for } \kappa < 1.
\end{equation}
Note that
\[
|\p \{ AX + B > y \} - \p \{ AX > y \} | \leq \p \{ AX + B > y \geq AX \}
+ \p \{ AX > y \geq AX + B \}.
\]
Now Fubini's theorem gives  for the first term
\[
\int_0^{\infty} \p \{ AX + B > y \geq AX \} y^{\kappa - 1 + \delta} {\mathrm{d}} y 
\leq (\kappa + \delta)^{-1}
\E I(B \geq 0) ((AX + B)_+^{\kappa + \delta} - (AX)_+^{\kappa + \delta}).
\]
The same calculation for the second term implies
\[
\int_0^\infty |\p \{ AX + B > y \} - \p \{ AX > y \} | y^{\kappa + \delta -1} {\mathrm{d}} 
y 
\leq (\kappa + \delta)^{-1}
\E  | (AX + B)_+^{\kappa + \delta} - (AX)_+^{\kappa + \delta} | .
\]
We show that the expectation on the right-hand side is finite. Indeed, for
$a, b \in \R$ we have
$|(a + b )_+^\gamma - a_+^\gamma| \leq |b|^\gamma$ for $\gamma \leq 1$ and
$|(a + b)_+^\gamma - a_+^\gamma| \leq 2 \gamma |b| (|a|^{\gamma -1} + |b|^{\gamma-1})$
for $\gamma > 1$. 
From Theorem 1.4 by Alsmeyer, Iksanov and R\"osler \cite{AIR}
we know that $\E |X|^\gamma  < \infty$ for any
$\gamma < \kappa$. (We note that for $\kappa > 1$ this also follows from Theorem 5.1 by
Vervaat \cite{Vervaat}. Actually, \cite[Theorem 1.4]{AIR} states equivalence.)
Assume that $\kappa \geq 1$ and let
$p = \kappa + 2\kappa \delta / (1 - \delta)$, $1/q = 1 - 1/p$. By H\"older's 
inequality and by the choice of $\delta$ in (\ref{eq:eps-cond})
\[
\begin{split}
& \E  | (AX + B)_+^{\kappa + \delta} - (AX)_+^{\kappa + \delta} |
\leq 2 (\kappa + \delta) \left[
\E |B| |AX|^{\kappa + \delta - 1} 
+ \E |B|^{\kappa + \delta} \right] \\
&\leq 2 (\kappa + \delta) \left[
\E |X|^{\kappa +\delta -1} 
(\E |B|^p)^{1/p} (\E A^{q(\kappa + \delta - 1)} )^{1/q}
+ \E |B|^{\kappa + \delta} \right] < \infty,
\end{split}
\]
which proves the statement for $\kappa \geq 1$. For $\kappa < 1$ we choose $\delta$ 
such that $\kappa + \delta \leq 1$, so
\[
\E  | |AX + B|^{\kappa + \delta} - |AX|^{\kappa + \delta} |
\leq \E |B|^{\kappa + \delta} < \infty.
\]

Finally, the positivity of the limit follows in exactly the same way as in \cite{Goldie}.
Goldie shows \cite[p.157]{Goldie} that for some positive constants $c, C > 0$
\[ 
\p \{ |X| > x \} \geq c \, \p \left\{ \max \{0, S_1, S_2, \ldots\} > C + \log x \right\}.
\] 
Now the positivity follows from Theorem \ref{th:max=1} and \ref{th:max<1}, respectively,
with $B \equiv 1$.
\end{proof}

Note that for Lemma \ref{lemma:keyren} we only need that 
$\hat \psi(s) = O(s^{-1})$. Obvious 
modification of the proof shows that this holds if
instead of assuming $\E |B|^\nu < \infty$ for some $\nu > \kappa$ we assume that
\[ 
\E |B|^\kappa (\log_+ |B|)^{\max\{ 1, \kappa\}} < \infty, \ 
\E |B|^\kappa \log_+ A < \infty, \text{ and } \
\E |B| A^{\kappa-1} \log_+ A < \infty.
\]
Clearly, the latter condition is weaker for independent $A$ and $B$.

\section{Appendix}

\subsection{Strong renewal theorem}

In this subsection we show that a slight extension of the strong renewal theorem by 
Caravenna \cite{Car} and Doney \cite{Doney} holds. The proof follows along the same lines 
as the proof of Caravenna \cite{Car}, so we only sketch the proof and emphasize the 
differences. For convenience, we also use Caravenna's notation.

\begin{theorem} \label{th:SRT}
Assume that the distribution function $H$ is nonarithmetic, and
for some $c, \kappa > 0$, $\alpha \in (0,1)$, and for a slowly varying 
function $\ell$ we have
\begin{equation} \label{eq:app-ass}
H(-x) \leq  c e^{-\kappa x}, \quad 1 - H(x) = \overline H(x) = \frac{\ell(x)}{x^\alpha},
\quad x > 0.
\end{equation}
Then, for the renewal function $U(x) =  \sum_{n=0}^\infty H^{*n}(x)$
\begin{equation} \label{eq:app-SRT}
\lim_{x \to \infty} m(x) [ U(x+h) - U(x)] = h C_\alpha
\end{equation}
holds for any $h> 0$ with $m(x) = \int_0^x \overline H(u) {\mathrm{d}} u$, if and only if
\begin{equation} \label{eq:app-iff}
\lim_{\delta \to 0} \limsup_{x \to \infty} x \overline H(x) 
\int_1^{\delta x} \frac{1}{y \overline H(y)^2} H(x - {\mathrm{d}} y) = 0. 
\end{equation}
\end{theorem}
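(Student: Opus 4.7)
The plan is to follow Caravenna's proof in \cite{Car} essentially verbatim, using the fact that the left tail condition $H(-x) \leq c e^{-\kappa x}$ in (\ref{eq:app-ass}) makes the contributions from negative increments asymptotically negligible compared with the regularly varying positive tail. Let $X_1, X_2, \ldots$ be iid with df $H$, set $S_n = X_1 + \ldots + X_n$, so that the local renewal mass equals $U(x+h) - U(x) = \sum_{n \geq 0} \p\{ S_n \in (x, x+h] \}$. Throughout I would reuse Caravenna's notation for truncation level $\delta x$, normalizing constants $a_n$, and the $\alpha$-stable density $g_\alpha$.

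For the necessity of (\ref{eq:app-iff}), I would transcribe Caravenna's ``two big jumps'' argument. The contribution to $\p\{S_n \in (x, x+h]\}$ from trajectories in which two increments of size comparable to $x$ both occur is bounded below by a quantity of order $n^2 h \overline H(x) \int_1^{\delta x} (y \overline H(y)^2)^{-1} H(x - {\mathrm{d}} y)$. Summing over $n$ near the typical count $m(x)^{-1}$ and comparing with the conjectured limit $h C_\alpha / m(x)$ forces (\ref{eq:app-iff}). This lower bound uses only the positive increments, so nonnegativity of $H$ is not invoked.

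For the sufficiency, I would carry through Caravenna's ``one big jump'' decomposition: for each $n$, partition $\{S_n \in (x,x+h]\}$ by the number of increments exceeding the threshold $\delta x$. Three cases arise: no big jump (handled by a local $\alpha$-stable limit theorem, giving a contribution of smaller order), exactly one big jump (the leading term, extracted by conditioning on the location of the big jump and applying a Stone-type local limit theorem to the sum of remaining small increments, which produces the constant $C_\alpha$), and at least two big jumps (controlled directly by (\ref{eq:app-iff})). The only place where the signedness of $X$ enters is in the analysis of the truncated increment $X \, I(|X| \leq \delta x)$: its negative part has exponential tail, hence contributes only $O(1)$ to the truncated mean, variance and all higher moments, and is therefore asymptotically swamped by the regularly varying positive part that dictates the normalization $a_n$.

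The main obstacle is the careful re-verification of the Fourier and Stone-type local estimates in Section 3 of \cite{Car}, where positivity of $H$ is used tacitly in several bounds on $\p\{S_n \in I\}$. I would replace each of them by its two-sided analogue, using that the real part expansion $1 - \mathrm{Re}\,\widehat H(t) \sim c |t|^\alpha \ell(1/|t|)$ near the origin still holds because the exponential left tail contributes only an analytic $O(t^2)$ correction, dominated by the regularly varying $|t|^\alpha$ term since $\alpha \in (0,1)$. The imaginary part, bounded by $|t| \cdot \E|X| \cdot I(\alpha > \text{trivial threshold})$ plus tail terms, is similarly harmless after the standard symmetrization step. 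Once these two-sided local bounds are in place, the rest of Caravenna's argument — the bootstrap from a weak local limit theorem to the strong form, and the passage from (\ref{eq:app-iff}) to (\ref{eq:app-SRT}) — goes through unchanged, completing the proof of Theorem \ref{th:SRT}.
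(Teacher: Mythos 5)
Your overall skeleton is the same as the paper's: follow Caravenna's one-big-jump decomposition, and argue that the exponential left tail makes the negative part negligible. But you have misidentified where the two-sidedness actually bites, and would be left with a gap if you carried out the plan as written.

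You declare the main obstacle to be re-verifying the Fourier and Stone-type local estimates of \cite{Car}, Section~3, claiming positivity is used ``tacitly'' there. It is not: as the paper observes, Caravenna's Lemmas~4.1 and~4.2 (and Stone's local limit theorem itself, \cite[Theorem 8.4.2]{BGT}) hold for \emph{any} $H$ in the domain of attraction of an $\alpha$-stable law, with no nonnegativity hypothesis. Your proposed Fourier-side re-derivation is therefore unnecessary work. Meanwhile you have not addressed the two places where the two-sidedness genuinely changes the argument. First, Caravenna's Lemma~5.1 bound $\p\{S_n \in z+I\} \leq C a_n^{-1} e^{-C' n/A(z)}$ must be re-proved: with a two-sided $H$, $S_n$ may be negative, and the proof now requires a separate Chernoff-type estimate $\p\{S_n \le K\} \le c\,e^{-c'n}$ exploiting the exponential left tail, combined with the $M_n \le z$ truncation; this is not captured by your remark about truncated moments being $O(1)$, which controls moments but not the probability that $S_n$ stays small or negative for large $n$. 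Second, in the backward-induction step of \cite[Theorem~6.1]{Car}, when one conditions on the $m$ big increments with sum $S_m = y$, the remaining sum $S_{n-m}$ must lie in $x - y + I$; with nonnegative increments, $y > x$ is impossible, but here it is not, so there is a genuinely new term $\int_x^\infty \p\{S_{n-m} \in x-y+I\}\,\p\{S_m \in \mathrm{d}y\}$ that must be bounded separately (the paper does this via the same Chernoff bound together with Remark 3.2 of \cite{Car}). Your proposal does not mention this term at all, and it is not subsumed by the ``swamped by the regularly varying part'' intuition.
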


In the following, $X, X_1, X_2, \ldots$ are iid random variables with df $H$,  
$S_n = X_1 + \ldots + X_n$ is their partial sum.
We may choose a strictly increasing differentiable function $A$, such that 
$\overline H(x) \sim 1/A(x)$, and $A'(x) \sim \alpha A(x) / x$
(see p.~15 \cite{BGT}). The norming constant  $a_n$, for which $S_n / a_n$ 
converges in distribution to an $\alpha$-stable law, can be given as
$a_n = a(n) = A^{-1}(n)$, where $A^{-1}$ is the inverse of $A$.
We fix an $h > 0$, and put $I= (-h,0]$.

\smallskip

First note that Lemmas 4.1 and 4.2 in \cite{Car} hold true for every $H$ in the 
domain of attraction of an $\alpha$-stable law; nonnegativity is not needed here. We show 
that the simple Lemma 5.1 in \cite{Car} also remains true in our case. This is needed to 
treat the small values of $z$.

\begin{lemma}
Assume that (\ref{eq:app-ass}) holds. Then there exist $C, C' > 0$, such that for all
$n \in \N$ and $z > 0$
\[
\p \{ S_n \in z + I \} \leq \frac{C}{a_n} e^{-C' n / A(z) }.
\]
\end{lemma}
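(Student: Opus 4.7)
The plan is to follow Caravenna's proof of Lemma 5.1 in \cite{Car} and adapt it to the two-sided case. The only place where Caravenna's argument uses nonnegativity is in bounding integrals over $(-\infty,0)$; under (\ref{eq:app-ass}) a simple integration by parts gives $\int_{-\infty}^0 e^{-\lambda x}\,H(dx) \le H(0) + c\lambda/(\kappa - \lambda) < \infty$ for any $\lambda < \kappa$, so the argument transfers after this substitution.

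First, I would establish an exponential Chebyshev bound with truncation. Set $\tilde X = X \wedge z$ and $\tilde S_n = \sum_i \tilde X_i$. Integration by parts on $\int_0^z e^{-\lambda x}\,H(dx)$, together with the left-tail estimate above, yields
\[
\E e^{-\lambda \tilde X} \le 1 + \frac{c\lambda}{\kappa - \lambda} - \lambda \int_0^z e^{-\lambda x}\overline H(x)\,dx.
\]
Taking $\lambda = 1/z$ and applying Karamata's theorem to $\overline H(x) = \ell(x)/x^\alpha$, the subtracted term is of order $c'/A(z)$ and dominates $c\lambda/(\kappa-\lambda) \sim c/(\kappa z)$ whenever $\alpha < 1$ and $z$ is large, since $1/A(z) \gg 1/z$. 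This gives $(\E e^{-\tilde X/z})^n \le e^{-cn/A(z)}$, hence $\p\{\tilde S_n \in z + I\} \le \p\{\tilde S_n \le z\} \le C e^{-cn/A(z)}$ by Markov. To refine this tail bound into a local one carrying the $1/a_n$ factor, I would use an exponential tilt with $\lambda$ chosen so that $\E_\lambda \tilde X = z/n$; under the tilted measure $\tilde S_n$ has mean $z$ and finite variance $\sigma_\lambda^2 \sim z^2/A(z)$, and the classical local CLT gives $\p_\lambda\{\tilde S_n \in z + I\} \le Ch/\sqrt{n\sigma_\lambda^2}$. Radon--Nikodym then converts this back to the bound claimed in the lemma. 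The big-jump contribution (events where $\max_i X_i > z$) is handled by conditioning on the large jump and using the exponential left tail to bound the probability that the remaining $n - 1$ variables sum to a negative value.

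The main obstacle will be the scale-matching in the local CLT step: the Gaussian scale $\sqrt{n\sigma_\lambda^2} \sim z\sqrt{n/A(z)}$ equals the target norming $a_n$ only when $z \sim a_n$. The resolution is to absorb the polynomial discrepancy $(a_n/z)^{1-\alpha/2}$ into $e^{-C'n/A(z)}$, which works because, by Potter's bound applied to the regularly varying $A$, $n/A(z) \ge c(a_n/z)^{\alpha - \epsilon}$ for any $\epsilon > 0$, so any fixed polynomial in $a_n/z$ is subdominant to the exponential. This absorption is the analytic heart of Caravenna's argument, and it transfers verbatim to our setting because the exponential left tail makes all required moment generating functions and Cram\'er transforms well-defined in a full neighborhood of~$0$.
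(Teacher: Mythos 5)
The approach you take is genuinely different from the paper's, and it contains a nontrivial gap at the crucial step. The paper's proof combines three elementary pieces: (a) a Chernoff-type tail bound $\p\{S_n \leq z\} \leq c_3 e^{-c_4 n\overline H(z)}$, obtained via the truncation $\p\{S_n\le z\}\le\p\{M_n\le z\}+\p\{S_n\le z,\,M_n>z\}$ and the exponential estimate $\p\{S_m\le K\}\le ce^{-c'm}$; (b) Stone's local limit theorem, which gives $\sup_x \p\{S_m \in x+I\}\le C/a_m$ uniformly over the whole line; and (c) the factorization $S_n = S_{n/2}+S_{n/2}'$, which lets one bound $\p\{S_n\in z+I\}$ by splitting the convolution integral at $z$ and applying (a) to one half and (b) to the other. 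That $n/2$-split is precisely what produces both the $1/a_n$ decay and the exponential factor simultaneously, with no tilting and no local CLT beyond Stone's theorem. It is also what Caravenna's Lemma 5.1 does in the nonnegative case, so the claim that you are ``following Caravenna's proof of Lemma 5.1'' is not accurate; your route is the Cram\'er/Bahadur--Rao-style local large deviation argument.

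That alternative route can in principle work, but as written it has a real gap. The step $\p_\lambda\{\tilde S_n \in z+I\}\le Ch/\sqrt{n\sigma_\lambda^2}$ invokes ``the classical local CLT,'' but the classical local CLT gives a constant depending on the underlying distribution, and here that distribution is the law of $\tilde X = X\wedge z$ under the tilt $\p_\lambda$, which varies with both $z$ and $\lambda=\lambda(n,z)$. To get a bound uniform over this two-parameter family one needs a quantitative (Berry--Esseen or Petrov-type) local limit theorem with explicit control of the characteristic function and higher moments of $\tilde X$ under $\p_\lambda$; you assert this without establishing it, and it is not an off-the-shelf statement for tilted, $z$-truncated, regularly-varying variables. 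In the same vein, the variance estimate $\sigma_\lambda^2\sim z^2/A(z)$ is computed at $\lambda=0$ but is used for the optimizing $\lambda$ with $\E_\lambda\tilde X = z/n$, which in the regime $n\gg A(z)$ shifts the mean far below $\E\tilde X$ and may change the variance substantially; this needs justification. Finally, the big-jump term $\p\{S_n\in z+I,\ M_n>z\}$ is dismissed in one sentence, but it too must produce both the $1/a_n$ factor and the exponential factor, which requires either a second application of Stone's theorem or its own case split. The paper's route sidesteps all of these issues by using only Stone's uniform bound and the Chernoff estimate glued together by the $S_{n/2}$ factorization.
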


\begin{proof}
We have to handle the case $\p\{ S_n \leq 0\}$. 
A 
truncation argument combined with Chernoff's bounding technique shows that for 
any fixed $K > 0$ there is $c, c' > 0$, such that for all $n \in \N$
\begin{equation} \label{eq:app-expbound}
\p  \{ S_n \leq K \} \leq c e^{-c' n}.
\end{equation}
Thus for $z \geq 0$, with $M_n = \max\{ X_i \, : \, i =1,2,\ldots, n\}$,
\[
\begin{split}
\p \{ S_n \leq z\} & \leq   \p \{ M_n \leq z \} + \p \{ S_n \leq z, M_n > z \} \\
& \leq c_1 e^{-c_2 n \overline H(z)} + n \int_z^\infty \p \{ S_{n-1} \leq z- y\} 
H({\mathrm{d}} y) 
\\
& \leq c_3 e^{- c_4 n \overline H(z)}.
\end{split}
\]
Assuming that $n$ is even, using Stone's local limit theorem \cite[Theorem 8.4.2]{BGT} we 
obtain
\[
\begin{split}
&\p  \{ S_n \in z+I \}  =
\int_0^z \p \{ S_{n/2} \in z - y +I \} \p \{ S_{n/2} \in {\mathrm{d}} y \}
+ \int_z^\infty \p \{ S_{n/2} \in z - y +I \} \p \{ S_{n/2} \in {\mathrm{d}} y \} \\
& \leq c_5  a_n^{-1} \p \{ S_{n/2} \leq z\} +
\sum_{k=0}^\infty \p \{ S_{n/2} \in [z + kh, z+ (k+1) h]\} 
\p \{ S_{n/2} \in [- (k+2)h, -k h]\} \\
& \leq c_3 c_5  a_n^{-1}  e^{- c_4 \frac{n}{2} \overline H(z)} + c_6 a_n^{-1} e^{-c_7 n},
\end{split}
\]
and the statement is proven.
\end{proof}

\begin{proof}[Proof of Theorem \ref{th:SRT}]
The necessity is shown in Theorem 1.9 by Caravenna \cite{Car} (see also 
\cite[Lemma 3.1]{Car} and note that in our case $q = 0$).

It is known (in the general two-sided case) that (\ref{eq:app-SRT}) is equivalent to
\[ 
\lim_{\delta \downarrow 0} \limsup_{x \to \infty} \frac{x}{A(x)}
\sum_{n=1}^{A(\delta x)} \p \{ S_n \in x + I \} = 0;
\] 
see (15) in \cite{Doney}, (3.4) in \cite{Car}, or the Appendix in \cite{Chi}.

As in \cite[Theorem 6.1]{Car} we claim that for $\ell \geq 0$
\begin{equation} \label{eq:app-srtcond2}
\lim_{\delta \downarrow 0} \limsup_{x \to \infty} \frac{x}{A(x)^{\ell + 1}}
\sum_{n=1}^{A(\delta x)} n^\ell \p \{ S_n \in x + I \} = 0.
\end{equation}
Using \cite[Lemma 4.1]{Car} we see that (\ref{eq:app-srtcond2}) holds for 
$\ell \geq \kappa_\alpha + 1 = \lfloor 1/\alpha \rfloor$, with $\lfloor \cdot \rfloor$ 
standing for the integer part. The proof goes by backward induction, i.e.~we assume that 
(\ref{eq:app-srtcond2}) holds for $\ell \geq \overline \ell + 1$, and we show it for
$\ell = \overline \ell$.

Let $B_n^m(y)$ denote the set (i.e.~a subset of the underlying probability space) on 
which there are exactly $m$ variables greater than $y$ among $X_1, \ldots, X_n$;
here $m \geq 0$, $y > 0$. Arguing as in the proof of \cite[Theorem 6.1]{Car}, we have to 
show that for $m \in \{ 1, 2, \ldots, \kappa_\alpha - \overline \ell\}$
\[ 
\lim_{\delta \downarrow 0} \limsup_{x \to \infty} \frac{x}{A(x)^{\overline \ell + 1}}
\sum_{n=1}^{A(\delta x)} n^{\overline \ell} \p \{ S_n \in x + I, B_n^m(\xi_{n,x}) \} = 0,
\] 
where $\xi_{n,x}= a_n^{\gamma_\alpha} x^{1 - \gamma_\alpha}$, with
$\gamma_\alpha = \alpha (1 - (1/\alpha - \lfloor 1/\alpha \rfloor)) / 4$. 
We have
\[
\begin{split}
&\p \{ S_n \in x + I, B_n^m(\xi_{n,x}) \} \leq 
n^m \p \{ S_n \in x + I, \min_{1 \leq i \leq m} X_i > \xi_{n,x},
\max_{m+1 \leq i \leq n} X_i \leq \xi_{n,x} \} \\
&\leq n^m \dint_{(y,w) \in (0,x]^2}  I(w \geq \xi_{n,x})
\p \{ S_{n-m} \in x -y + I, B_{n-m}^0(\xi_{n,x})  \} 
\p \{ S_m \in {\mathrm{d}} y, \min_{1 \leq i \leq m } X_i \in {\mathrm{d}} w \} \\
& \phantom{\leq} \  + n^m \int_x^\infty \p \{ S_{n-m} \in x -y + I  \} 
\p \{ S_m \in {\mathrm{d}} y \}.
\end{split}
\]
The first term is exactly the same as in the proof of \cite[Theorem 6.1]{Car}. More 
importantly, it also can be handled in exactly the same way, since only Lemmas 4.2 and 
5.1 in \cite{Car} are used, and the manipulations with the measure $\p \{ S_m \in 
{\mathrm{d}} y\}$ 
always concern the set where the minimum is positive. Therefore, to finish the proof it 
is enough to show that
\begin{equation} \label{eq:app-srtcond4}
\limsup_{x \to \infty} \frac{x}{A(x)^{\overline \ell + 1}}
\sum_{n=1}^{\infty} n^{\overline \ell + m} \int_x^\infty \p \{ S_{n-m} \in x -y + I  \} 
\p \{ S_m \in {\mathrm{d}} y \} = 0. 
\end{equation}
This is easy, since 
\begin{equation} \label{eq:app-in}
\begin{split}
&\int_x^\infty \p \{ S_{n-m} \in x -y + I  \} \p \{ S_m \in {\mathrm{d}} y \} \\
& \leq \sum_{k=0}^\infty \p \{ S_m \in [z + kh, z+ (k+1) h]\} 
\p \{ S_{n-m} \in [- (k+2)h, -k h]\} \\
& \leq c_1 \max_{y > x} \p \{ S_m \in y + I\} \p \{ S_{n-m} \leq 0 \} \\
& = o ( A(x)/x ) e^{-c_2 n},
\end{split}
\end{equation}
where at the last inequality we used (\ref{eq:app-expbound}) and the fact that 
(\ref{eq:app-iff}) implies that for any $m \geq 1$
\[
\lim_{x \to \infty} \frac{x}{A(x)} \p \{ S_m \in x+I\} = 0; 
\]
see Remark 3.2 in \cite{Car} or Remark 5 in \cite{Doney}.
Clearly, (\ref{eq:app-in}) implies (\ref{eq:app-srtcond4}), and the proof is finished.
\end{proof}

\subsection{A counterexample}

Here we give a counterexample to \cite[Lemma 3]{Rivero} and \cite[Theorem 4]{VT},
which shows that alone from the direct Riemann integrability of $z$  the key renewal 
theorem (\ref{eq:int-asy}) does not follow.

Let $a_n = n^{-\beta}$, with some $\beta > 1$, and let $d_n \uparrow \infty$ a sequence 
of integers. Consider the function $z$ that satisfies $z(d_n) = a_n$, 
$z(d_n \pm 1/2) = 0$, is linearly interpolated on the intervals $[d_n - 1/2, d_n + 1/2]$, 
and 0 otherwise. Since $\sum_{n=1}^\infty a_n < \infty$ the function $z$ is directly 
Riemann integrable.

Consider a renewal measure $U$ for which SRT (\ref{eq:renewal-alpha}) holds. Let $a > 
0$ be 
such that $U(a+1/4) - U(a-1/4) > 0$. 
From the proof of \cite[Theorem 3]{Eri70} it is clear that for any $\nu \in (0,1)$
\[
m(x) \int_{\nu x}^x z(x-y)  U( {\mathrm{d}} y) \to C_\alpha \int_0^\infty z(y) 
{\mathrm{d}} y.
\]
On the other hand for $x = a+ d_n$
\[
\begin{split}
\int_{a - 1/4}^{a+1/4} z(x-y) U( {\mathrm{d}} y )
& \geq \frac{a_n}{2} [U(a+1/4) - U(a-1/4)]
\end{split}
\]
Choosing $d_n = n^2$ and $\beta$ such that $2 \alpha + \beta < 2$, and recalling that $m$ 
is regularly varying with index $1-\alpha$, we see that 
$m(a+d_n) a_n \to \infty$, so the asymptotic (\ref{eq:int-asy})  cannot hold.

This example also shows that a growth condition on $z$, something like Erickson's
$z(x) = O(1/x)$, is needed.

\subsection{Local subexponentiality}

We claim that Theorem 5 in \cite{AFK} remains true in our setup.
Additionally to the local subexponential property, we assume that 
$\sup_{y \geq x} H(y + \Delta) = O(H(x + \Delta))$. The main technical tool in 
\cite{AFK} is the equivalence in Proposition 2. In our setup it has the 
following form.

\begin{lemma} \label{lemma:prop2}
Assume that $H \in \mathcal{L}_\Delta$, and 
$\sup_{y \geq x} H(y + \Delta) = O(H(x + \Delta))$. 
Let $X, Y$ be iid $H$. The following are 
equivalent:
\begin{itemize}
\item[(i)] $H \in \mathcal{S}_\Delta$;
\item[(ii)] there is a function $h$ such that $h(x) \to \infty$, $h(x) < x/2$, 
$H(x-y + \Delta) \sim H(x + \Delta)$ uniformly in $|y| \leq h(x)$, and
\[
\p \{ X + Y \in x + \Delta, X > h(x), Y > h(x) \} =
o(H(x + \Delta)).
\]
\end{itemize}
\end{lemma}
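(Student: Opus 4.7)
The plan is to adapt the proof of \cite[Proposition 2]{AFK}, where $H$ is supported on $[0,\infty)$, to our two-sided setting; the assumption $\sup_{y\ge x}H(y+\Delta)=O(H(x+\Delta))$ substitutes for the trivial bound $H((-\infty,0))=0$ available in the nonnegative case.

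For the implication (ii) $\Rightarrow$ (i), I would split $(H*H)(x+\Delta)$ by a symmetric cut at $h(x)$:
\[
(H*H)(x+\Delta) = 2\,\p\{X+Y\in x+\Delta,\, Y\le h(x)\} - \p\{X,Y\le h(x),\, X+Y\in x+\Delta\} + \p\{X,Y>h(x),\, X+Y\in x+\Delta\}.
\]
The middle term vanishes for large $x$, since $X+Y\le 2h(x)<x$. The last term is $o(H(x+\Delta))$ by hypothesis. For the first term I would split the integral $\int_{-\infty}^{h(x)} H(x-y+\Delta)\,H({\mathrm{d}} y)$ at $y=-h(x)$: on $|y|\le h(x)$ the uniform equivalence $H(x-y+\Delta)\sim H(x+\Delta)$ combined with $H((-h(x),h(x)])\to 1$ gives $(1+o(1))H(x+\Delta)$, while for $y<-h(x)$ the integrand is bounded by $\sup_{z>x+h(x)} H(z+\Delta)=O(H(x+\Delta))$, so that tail contributes $O(H(x+\Delta))\cdot H(-h(x))=o(H(x+\Delta))$.

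For (i) $\Rightarrow$ (ii), the harder direction, I would first note that $H\in\mathcal{L}_\Delta$ already implies $H(x-y+\Delta)\sim H(x+\Delta)$ for each fixed $y\in\mathbb{R}$: positive $y$ is covered by iterating the uniformity on $[0,1]$, and negative $y$ follows by inverting the ratio. A diagonal-extraction argument analogous to the one in \cite[Lemma 2.2]{FKZ} then produces a slowly growing $h(x)\uparrow\infty$ with $h(x)<x/2$ along which this equivalence holds uniformly in $|y|\le h(x)$. With this $h$ fixed, the decomposition above and the analysis of the truncated integral from the first direction give
\[
\p\{X,Y>h(x),\, X+Y\in x+\Delta\} = (H*H)(x+\Delta) - 2H(x+\Delta) + o(H(x+\Delta)) = o(H(x+\Delta)),
\]
where the last equality uses (i).

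The principal obstacle is arranging the diagonal extraction so that the uniformity of $H(x-y+\Delta)\sim H(x+\Delta)$ extends to negative $y$ (i.e., to arguments $x-y>x$) simultaneously with $h(x)<x/2$. This is precisely where the $O$-bound hypothesis becomes indispensable: without it the tail integral $\int_{-\infty}^{-h(x)} H(x-y+\Delta)\,H({\mathrm{d}} y)$ cannot be controlled, since $H(x-y+\Delta)$ could a priori grow as $y$ decreases, breaking the proof of (ii) $\Rightarrow$ (i) and also obstructing the construction of $h$ in the reverse direction.
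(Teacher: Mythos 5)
Your proposal is correct and follows the same route the paper intends: it adapts the proof of Proposition~2 in Asmussen--Foss--Korshunov, with the extra hypothesis $\sup_{y\ge x}H(y+\Delta)=O(H(x+\Delta))$ doing the work that nonnegativity of the support does in their setting (namely, controlling $\int_{-\infty}^{-h(x)}H(x-y+\Delta)\,H(\mathrm{d}y)$). The paper omits the proof precisely by pointing to this reference, and your decomposition, the vanishing of the middle term when $h(x)<x/2$, and the two-step use of the $O$-bound on the far-left tail all reproduce that argument faithfully; the ``diagonal extraction'' you flag as a potential obstacle is in fact routine once one notes that $\mathcal{L}_\Delta$ gives $H(x+t+\Delta)/H(x+\Delta)\to1$ for each fixed $t\in\R$, of either sign.
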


The proof is similar to the proof of Proposition 2 in \cite{AFK}, so it is omitted.
Assuming the extra growth condition all the results in \cite{AFK} 
hold true with the obvious modification of the proof.

\smallskip

Alternatively, we could use Theorem 1.1 by Watanabe and Yamamuro \cite{WY1}, 
from which the extension of Theorem 5 in \cite{AFK} follows. In Theorem 1.1
\cite{WY1} finite exponential moment for the left-tail is assumed, which is 
satisfied in our setup by (\ref{eq:Fknegtail}). Also note that Theorem 1.1 is 
a much stronger result what we need: it gives an equivalence of certain tails, 
and we only need implication (ii) $\Rightarrow$ (iii).

\medskip

\noindent
\textbf{Acknowledgement.} I thank M\'aty\'as Barczy for useful comments on the 
manuscript, V\'ictor Rivero for drawing my attention to Counterexample 1 
in \cite{Rivero}, and Alexander Iksanov for reference \cite{Iksanov}. 
I am thankful to the referees for the thorough reading of the paper, and for their 
comments and suggestions, which greatly improved the paper.
This research was funded by a postdoctoral fellowship
of the Alexander von Humboldt Foundation.

\end{document}